\def\id{\mathrm{id}}
\def\<#1,#2>{\langle\,#1,\,#2\,\rangle}
\def\inf{\mathrm{inf}}
\def\sigm{{\mathrm{\Sigma M}}}
\def\qed{\ensuremath{\hfill\Box}}
\newcommand{\intersect}{\mathinner{\cap}}
\def\Aut{\mathrm{Aut}}
\def\so{\mathfrak{so}}
\def\SO{\mathrm{SO}}
\def\Spin{\mathrm{Spin}}
\def\GL{\mathrm{GL}}
\def\1{\mathbf{1}}
\def\#{\sharp}
\def\Aut{\mathrm{Aut}}
\def\e{\varepsilon}
\def\id{\mathrm{id}}
\def\l{\lambda}
\def\R{\mathbb{R}}
\def\sgn{\mathrm{sgn}}
\def\wb{Weitzenb\"ock }
\def\<#1,#2>{\langle\,#1,\,#2\,\rangle}
\def\rectangle(#1,#2)[#3,#4]#5{
 \multiput(#1,#2)(#3,0)2{\line(0,1){#4}}\multiput(#1,#2)(0,#4)2{\line(1,0){#3}}
 \put(#1,#2){\vbox to #4pt{\hbox to #3pt{\hfill}\vfill}}}
\def\recttext(#1,#2)[#3,#4]#5{\put(#1,#2)
 {\vbox to #4pt{\vfill\hbox to #3pt{\hss#5\hss}\vfill}}}
\def\qed{\ensuremath{\hfill\Box}}
\newtheorem{Lemma}{Lemma}[section]
\newtheorem{Proposition}[Lemma]{Proposition}
\newtheorem{Theorem}[Lemma]{Theorem}
\theoremstyle{definition}
\newtheorem{Definition}[Lemma]{Definition}
\newtheorem{Example}[Lemma]{Example}
\newtheorem{Question}[Lemma]{Question}
\newtheorem{Remark}[Lemma]{Remark}
\newcommand{\ip}[1]{\langle#1\rangle}
\newcommand{\NE}{\mathcal{N\!E}}
\def\vs{\vskip .4cm}
\numberwithin{equation}{section}
\title[A New Proof of Branson's Classification of Elliptic Generalized Gradients]{A New Proof of Branson's Classification\\ of Elliptic Generalized Gradients}
\author{Mihaela Pilca}
\thanks{The author gratefully ackowledges partial financial support from DFG-Graduate School 1269 ``Global Structures in Geometry and Analysis" and SFB/TR 12 ``Symmetries and Universality in Mesoscopic Systems".}
\address{Mihaela Pilca \\ Mathematisches Institut\\ Universit\"at zu K\"oln\\ Weyertal 86-90 D-50931 K\"oln\\ Germany and Institute of Mathematics ``Simion Stoilow" of the Romanian Academy\\ 21 Calea Grivitei Str.\\ 010702-Bucharest\\ Romania.}
\email{mpilca@mi.uni-koeln.de}
\begin{document}

\begin{abstract}
We give a representation theoretical proof of Branson's classification, \cite{br1}, of mini\-mal elliptic sums of generalized gradients. The original proof uses tools of harmonic analysis, which as powerful as they are, seem to be specific for the structure groups $\SO(n)$ and $\Spin(n)$. The different approach we propose is based on the relationship between ellipticity and optimal Kato constants and on the representation theory of $\so(n)$. Optimal Kato constants for elliptic operators were computed by Calderbank, Gauduchon and Herzlich, \cite{cgh}. We extend their method to all generalized gradients (not necessarily elliptic) and recover Branson's result, up to one special case. The interest of this method is that it is better suited to be applied for classifying elliptic sums of generalized gradients of $G$-structures, for other subgroups $G$ of the special orthogonal group.
 
\vs

\noindent
2000 {\it Mathematics Subject Classification}: Primary 58J10, 22E45.\\
\noindent
{\it Keywords}: generalized gradient, ellipticity, Kato constant.
\end{abstract}

\maketitle

\section{Introduction}

The classical notion of generalized gradients, also called Stein-Weiss operators, was first introduced by Stein and Weiss, \cite{sw}, on an oriented Riemannian manifold, as a generalization of the Cauchy-Riemann equations. They are first order differential operators acting on sections of vector bundles associated to irreducible representations of the special orthogonal group (or of the spin group, if the manifold is spin), which are given by the following universal construction: one projects onto an irreducible subbundle the covariant derivative induced on the associated vector bundle by the Levi-Civita connection or, more generally, by any metric connection.

Some of the most important first order differential operators which naturally appear in geometry are, up to normalization, particular cases of generalized gradients. For example, on a Riemannian manifold, the exterior differential acting on differential forms, its formal adjoint, the codifferential, and the conformal Killing operator on $1$-forms are generalized gradients. On a spin manifold, classical examples of generalized gradients are the Dirac operator, the twistor (or Penrose) operator and the Rarita-Schwinger operator. 

On an oriented Riemannian manifold, generalized gradients naturally give rise, by composition with their formal adjoints, to second order differential operators acting on sections of associated vector bundles. Particularly important are the extreme cases of linear combinations of such second order operators: if the linear combination provides a zero-order operator, then it is a curvature term and one obtains a so-called \wb formula; if the \mbox{linear} combination is a second order differential operator, then it is interesting to determine when it is elliptic. Whereas \wb formulas play a key role in relating the local differential geometry to global topological properties by the so-called Bochner method (for recent systematic approaches to the description of all \wb formulas we refer to \cite{h3} and \cite{usgw2}), the importance of elliptic operators is well established, see \emph{e.g.} the seminal paper \cite{as}.

The elliptic second order differential operators constructed this way from generali\-zed gradients between vector bundles with structure group $\SO(n)$ or $\Spin(n)$ over an oriented Riemannian manifold were completely classified by Branson, \cite{br1}. The classical example is the Laplacian acting on differential forms, which is obtained by assembling two generalized gradients, namely the exterior differential and the codifferential. Branson showed that it is enough to take surprisingly few generalized gradients in order to obtain an elliptic operator. It turned out that Laplace-type operators represent the generic case. Namely, apart from a few known exceptions, each minimal elliptic operator is given by a pair of generalized gradients. The arguments used by Branson are based on techniques of harmonic analysis and explicit computations of the spectra of generalized gradients on the sphere. Partial results were previously obtained by Kalina, Pierzchalski and Walczak, \cite{kpw}, who showed that the only generalized gradient which is strongly elliptic is given by the projection onto the Cartan summand. Furthermore, the projection onto the complement of the Cartan summand is also elliptic, by a result of Stein and Weiss, \cite{sw}.

In this paper we give a new proof of Branson's classification of such elliptic operators. The method we use is completely different from the original one in \cite{br1}, which seems to be specific for the two structure groups $\SO(n)$ or $\Spin(n)$. Our approach is mainly based on the one hand on the relationship between ellipticity and Kato constants and on the other hand on the representation theory of the Lie algebra $\mathfrak{so}(n)$. The starting point is the remark that these elliptic operators are closely related to the existence of refined Kato inequalities, which was first noticed by Bourguignon, \cite{jpb}. The explicit computation of the optimal Kato constants for all elliptic differential operators obtained from generalized gradients by the above construction was given by Calderbank, Gauduchon and Herzlich, \cite{cgh}. In the first part of our proof we extend their computation to all (not necessarily elliptic) sums of generalized gradients and then use it to recover Branson's list of minimal elliptic operators, up to an exceptional case. In the second part of the proof we show that these are \emph{all} minimal elliptic operators, \emph{i.e.} to find the list of maximal non-elliptic operators. The tool used here is the branching rule for the special orthogonal group.

The construction of the classical generalized gradients for the structure groups $\SO(n)$ and $\Spin(n)$ can be carried over to $G$-structures, when there is a reduction of the structure group of the tangent bundle of a Riemannian manifold $(M^n,g)$ to a closed subgroup $G$ of $\SO(n)$ (see \emph{e.g.} \cite{mpth}). The arguments of our new approach suggest that they should carry over to other subgroups $G$ of $\SO(n)$, in order to provide the classification of natural elliptic operators constructed from $G$-generalized gradients. 

The paper is organized as follows. We first present in \S~\ref{sectgengrad} the general setting and notation. We then recall in \S~\ref{brclass}  Branson's classification of minimal elliptic operators that naturally arise from generalized gradients (see Theorems~\ref{brclassodd} and ~\ref{brclassev}). In the last section we give our new proof of Branson's classification. In \S~\ref{ellkato} we extend the computation of the Kato constant given in \cite{cgh} to all sums of generalized gradients and then recover the list of minimal elliptic operators. In \S~\ref{nonellbr} we show how the branching rules for the special orthogonal group let us conclude that this list is complete.

\section{Generalized Gradients}\label{sectgengrad}

We briefly recall in this section the general construction of generalized gradients given by Stein and Weiss, \cite{sw}, on an oriented Riemannian (spin) manifold. 

Let us first state the general context, fix the notation and briefly recall the representation theoretical background needed to define the generalized gradients. The description of the representations of $\so(n)$, the Lie algebra of $\SO(n)$, differs slightly according to the parity of $n$. We write $n=2m$ if $n$ is even and $n=2m+1$ if $n$ is odd, where $m$ is the rank of $\so(n)$. Let $\{e_1, \dots,e_n\}$ be a fixed oriented orthonormal basis  of $\mathbb{R}^n$, so that $\{e_i\wedge e_j\}_{i<j}$ is a basis of the Lie algebra $\so(n)\cong\Lambda^2\mathbb{R}^n$. We also fix a Cartan subalgebra $\mathfrak{h}$ of $\so(n)$ by the basis $\{e_1\wedge e_2, \dots, e_{2m-1}\wedge e_{2m}\}$ and denote the dual basis of $\mathfrak{h}^*$ by $\{\varepsilon_1,\dots,\varepsilon_m\}$. The Killing form is normalized such that this basis is orthonormal. Roots and weights are given by their coordinates with respect to the orthonormal basis $\{\varepsilon_i\}_{i=\overline{1,m}}$. Finite-dimensional complex irreducible $\so(n)$-representations are parametrized by the \emph{dominant weights}, \emph{i.e.} those weights whose coordinates are either all integers or all half-integers,\linebreak $\lambda=(\lambda_1,\dots,\lambda_m)\in\mathbb{Z}^m\cup (\frac{1}{2}+\mathbb{Z})^m$ and which satisfy the inequality:
\begin{equation}\label{domw}
\begin{split}
\lambda_1\geq \lambda_2 \geq \cdots \lambda_{m-1}\geq |\lambda_m|, \quad &\text{if }n=2m, \text{ or }\\
\lambda_1\geq \lambda_2 \geq \cdots \lambda_{m-1}\geq \lambda_m \geq 0, \quad &\text{if }n=2m+1.
\end{split}
\end{equation}
Through this parametrization a dominant weight $\lambda$ is the highest weight of the corresponding representation. With a slight abuse of notation, we use the same symbol for an irreducible representation and its highest weight. The representations of $\so(n)$ are in one-to-one correspondence with the representations of the corresponding simply-connected Lie group, \emph{i.e.} $\Spin(n)$, the universal covering of $\SO(n)$. The representations which factor through $\SO(n)$ are exactly those with $\lambda\in\mathbb{Z}^m$. For example, the (complex) standard representation, denoted by $\tau$, is given by the weight $(1,0,\dots,0)$; the weight $(1,\dots,1,0,\dots,0)$ (with $p$ ones) cor\-responds to the $p$-form representation $\Lambda^p\mathbb{R}^n$, whereas the dominant weights $\lambda=(1,\dots,1,\pm1)$, for $n=2m$, correspond to the representation of selfdual, respectively antiselfdual $m$-forms; the representation of totally symmetric traceless tensors $S^p_0\mathbb{R}^n$ has highest weight $(p,0,\dots,0)$. 

The following so-called \emph{classical selection rule} (see \cite{feg}) describes the decomposition of the tensor product $\tau\otimes\lambda$ into irreducible $\so(n)$-representations, where $\tau$ is the standard representation and $\lambda$ is any irreducible representation. 
\begin{Lemma}\label{selectrule}
An irreducible representation of highest weight $\mu$ occurs in the decomposition of $\tau\otimes\lambda$ if and only if the following two conditions are fulfilled:
\begin{itemize}
	\item[(i)] $\mu=\lambda\pm\varepsilon_j$, for some $j=1,\dots, m$, or $n=2m+1$, $\lambda_m>0$ and $\mu=\lambda$,	
  \item[(ii)] $\mu$ is a dominant weight, \emph{i.e.} satisfies \eqref{domw}.
\end{itemize}
\end{Lemma}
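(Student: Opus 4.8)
The plan is to prove the classical selection rule by combining Kostant's multiplicity formula with a careful analysis of which candidate weights $\mu = \lambda \pm \varepsilon_j$ remain dominant. First I would observe that the standard representation $\tau$ has weights $\pm\varepsilon_1, \dots, \pm\varepsilon_m$, each with multiplicity one, plus the zero weight with multiplicity one when $n = 2m+1$. Hence every weight occurring in $\tau \otimes \lambda$ is of the form $\nu + \varepsilon$, where $\nu$ is a weight of $\lambda$ and $\varepsilon \in \{\pm\varepsilon_j\} \cup \{0\}$ (the last only in the odd case). In particular, any irreducible constituent of highest weight $\mu$ must have $\mu$ of this shape with $\nu = \lambda$ the highest weight, since $\mu$ must be $\preceq$ some $\nu + \varepsilon$ and a short case-check on the partial order rules out $\nu \prec \lambda$. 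This already forces condition (i) as a necessary condition, and dominance of $\mu$ is necessary trivially; so the content is the converse.

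For the converse, I would use the fact that $\tau$ is minuscule in the even case (and "almost minuscule" in the odd case), which makes $\tau \otimes \lambda$ multiplicity-free or nearly so and lets one compute constituents via the PRV-type / Brauer--Klimyk formula: the constituents of $\tau \otimes \lambda$ are exactly the representations with highest weight $w \cdot (\lambda + \varepsilon)$ for $\varepsilon$ a weight of $\tau$ and $w$ the unique Weyl group element (if any) making $w\cdot(\lambda+\varepsilon)$ dominant, counted with sign, so that after cancellation one is left precisely with those $\lambda + \varepsilon$ that are already dominant. Concretely: $\lambda + \varepsilon_j$ is automatically dominant unless it violates one of the inequalities in \eqref{domw}, in which case $\lambda_j = \lambda_{j-1}$ and the reflection in the corresponding simple root sends $\lambda + \varepsilon_j$ to $\lambda + \varepsilon_{j-1}$, producing a cancellation with the $\varepsilon_{j-1}$ term; similarly $\lambda - \varepsilon_j$ is dominant unless $\lambda_j = \lambda_{j+1}$ (cancels with the $\varepsilon_{j+1}$ contribution) or, for $n = 2m$, there is the folding at the last node, or, for $n = 2m+1$, the case $\lambda_m = 0$ where $\lambda - \varepsilon_m$ is non-dominant and the zero-weight term is needed to see that $\mu = \lambda$ occurs exactly when $\lambda_m > 0$. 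Tracking these cancellations yields exactly the list in (i) subject to (ii).

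I expect the main obstacle to be the bookkeeping at the "ends" of the Dynkin diagram: for $n = 2m$ the two tails of $D_m$ (the $\pm$ at the last coordinate, i.e. the distinction between $\lambda_m$ and $-\lambda_m$ and the possibility $\lambda_{m-1} = |\lambda_m|$) and for $n = 2m+1$ the short root at the end of $B_m$, which is also the source of the extra alternative $\mu = \lambda$ when $\lambda_m > 0$. These require separately checking that the Weyl-group cancellations pair up the non-dominant $\lambda \pm \varepsilon_j$ with exactly one dominant partner and leave no spurious survivor, and that in the odd case the zero weight of $\tau$ contributes $\mu = \lambda$ if and only if $\lambda_m > 0$. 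An alternative to the Brauer--Klimyk computation, which I would fall back on if the sign-cancellation accounting gets unwieldy, is a direct dimension count: show $\sum_{\mu} \dim \mu = (\dim \tau)(\dim \lambda)$ where the sum runs over the claimed list, using the Weyl dimension formula; this is a purely combinatorial identity in the $\lambda_i$ and sidesteps the Weyl-group folding entirely, at the cost of a longer but routine calculation.
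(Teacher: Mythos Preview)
The paper does not prove this lemma at all: it is introduced as ``the so-called classical selection rule (see \cite{feg})'' and used as a black box throughout. So there is no proof in the paper to compare your proposal against; any correct argument you supply would be strictly more than what the paper offers.

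Your plan via the Brauer--Klimyk (Racah/Steinberg) formula is the standard route and will go through, including the endpoint bookkeeping you flag at the $D_m$ fork and the $B_m$ short root. One organizational correction: your stand-alone necessity argument is not right as written. Knowing that $\mu$ is a weight of $\tau\otimes\lambda$ only gives $\mu=\nu+\varepsilon$ for \emph{some} weight $\nu$ of $V_\lambda$ and some weight $\varepsilon$ of $\tau$; it does not by itself force $\nu=\lambda$, and a ``short case-check on the partial order'' will not close this gap (for instance $\mu=\varepsilon_1$ can appear in $\tau\otimes V_{(2,0,\dots,0)}$ as $0+\varepsilon_1$ with $\nu=0\prec\lambda$, even though it also equals $\lambda-\varepsilon_1$). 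What actually pins every constituent to the form $\lambda+\varepsilon$ is precisely the Brauer--Klimyk mechanism you invoke for the converse: it shows that all surviving highest weights are among $\{\lambda+\varepsilon:\varepsilon\text{ a weight of }\tau\}$, giving necessity and sufficiency in one stroke. So drop the separate necessity paragraph and run Brauer--Klimyk from the outset. Your account of the cancellation in the odd case (the non-dominant $\lambda-\varepsilon_m$ term pairing with the zero-weight term when $\lambda_m=0$, so that $\mu=\lambda$ survives iff $\lambda_m>0$) is correct.
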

We adopt the same terminology as in \cite{usgw2} and call \emph{relevant weights of $\lambda$} (and write $\varepsilon\subset\lambda$) those weights $\varepsilon$ of $\tau$, $\varepsilon\in\{0,\pm\varepsilon_1,\dots,\pm\varepsilon_m\}$, with the property that $\lambda+\varepsilon$ occurs in the decomposition of $\tau\otimes\lambda$. The decomposition of the tensor product is then expressed as follows:
\begin{equation}\label{decompirred}
\tau\otimes\lambda=\underset{\varepsilon\subset\lambda}{\oplus}(\lambda+\varepsilon).
\end{equation}
The essential property of the decomposition \eqref{decompirred} is that it is multiplicity-free, \emph{i.e.} the isotypical components are actually irreducible. It thus follows that the projections onto each irreducible summand $\lambda+\varepsilon$ in the splitting are well-defined; we denote them by $\Pi_\varepsilon$.

Let $(M,g)$ be an oriented Riemannian manifold, $\SO_g M$ denotes the principal $\SO(n)$-bundle of oriented orthonormal frames and $\nabla$ any metric connection, considered either as a connection $1$-form on $\SO_g M$ or as a covariant derivative on the tangent bundle $\mathrm{TM}$. If $M$ has, in addition, a spin structure, then we consider the corresponding principal $\Spin(n)$-bundle, denoted by $\Spin_g M$, and the induced metric connection $\nabla$. We consider vector bundles associated to $\SO_g M$ (or $\Spin_g M$) and irreducible $\SO(n)$ (or $\Spin(n)$)-representations of highest weight $\lambda$ and denote them by $V_\lambda M$. For instance, the tangent bundle is associated to the standard representation $\tau:\SO(n)\hookrightarrow \GL(\mathbb{R}^n)$ and the bundle of $p$-forms is associated to the irreducible representation of highest weight $\lambda=(1,\dots,1,0,\dots,0)$ (with $p$ ones). We identify the cotangent bundle $\mathrm{T}^*\mathrm{M}$ and the tangent bundle $\mathrm{TM}$ using the metric $g$, since they are associated to equivalent $\SO(n)$-representations. The decomposition \eqref{decompirred} carries over to the associated vector bundles:
\begin{equation}\label{decirrmfd}
\mathrm{T}^*\mathrm{M}\otimes V_\lambda M\cong \mathrm{TM}\otimes V_\lambda M\cong \underset{\varepsilon\subset\lambda}{\oplus}V_{\lambda+\varepsilon} M
\end{equation}
and the corresponding projections are also denoted by $\Pi_\varepsilon$.

A metric connection $\nabla$ on $\SO_g M$ (or $\Spin_g M$) induces on any associated vector bundle $V_\lambda M$ a covariant derivative, denoted also by $\nabla:\Gamma(V_\lambda M)\to\Gamma(\mathrm{T}^*\mathrm{M}\otimes V_\lambda M)$. The generalized gradients are then built-up by projecting $\nabla$  onto the irreducible subbundles $V_{\lambda+\varepsilon} M$ given by the splitting \eqref{decirrmfd}. 

\begin{Definition}\label{defingg}
Let $(M,g)$ be a Riemannian manifold, $\nabla$ a metric connection and $V_\lambda M$ the vector bundle associated to the irreducible $\SO(n)$ (or $\Spin(n)$)-representation of highest \mbox{weight $\lambda$}. For each relevant weight $\varepsilon$ of $\lambda$, \emph{i.e.} for each irreducible component in the decomposition of $\mathrm{T}^*\mathrm{M}\otimes V_\lambda M$, there is a \emph{generalized gradient} $P_\varepsilon$ defined by the composition:
\begin{equation}\label{defgg}
\Gamma(V_\lambda M) \xrightarrow{\nabla} \Gamma(\mathrm{T^*M}\otimes V_\lambda M) \xrightarrow{\Pi_\varepsilon} \Gamma(V_{\lambda+\varepsilon} M). 
\end{equation}
\end{Definition}

Generalized gradients may be thus defined by any metric connection and the classical case is when $\nabla$ is the Levi-Civita connection. Those defined by the Levi-Civita connection play an important role since they are conformal invariant (see \cite{mpcinv}). The examples given in the sequel are of this type. 

\begin{Example}[Generalized Gradients on Differential Forms]\label{pforms}
We consider the bundle of $p$-forms, $\Lambda^p M$, on a Riemannian manifold $(M^n,g)$ and assume for simplicity that $n=2m+1$ and $p\leq m-1$. The highest weight of the representation is $\lambda_p=(1,\dots,1,0,\dots,0)$ and, by the selection rule in Lemma~\ref{selectrule}, it follows that there are three relevant weights for $\lambda_p$, namely $-\varepsilon_p$, $\varepsilon_{p+1}$ and $\varepsilon_1$. The tensor product then decomposes as follows:
\[\mathrm{TM} \otimes\Lambda^p M\cong\Lambda^{p-1}M\oplus \Lambda^{p+1} M\oplus \Lambda^{p,1} M,\]
where the last irreducible component is the Cartan summand (whose highest weight is equal to the sum of the highest weights of the factors of the tensor product). The generalized gradients in this case are, up to a constant factor, the following:  the codifferential, $\delta$, the exterior derivative, $d$, and respectively the so-called \emph{twistor operator}, $T$.
\end{Example}

\begin{Example}[Dirac and Twistor Operator]\label{exsp}
The spinor representation \mbox{$\rho_n\!:\! \Spin(n) \to\! \Aut(\Sigma_n)$}, with $n$ odd, is irreducible of highest weight $(\frac{1}{2}, \dots, \frac{1}{2})$ and accordingly, on a spin manifold, the tensor product bundle splits into two irreducible subbundles as follows:
\[\mathrm{TM}\otimes \sigm\cong\sigm\oplus \ker(c),\]
where $c:\mathrm{TM}\times \sigm\to\sigm$ denotes the \emph{Clifford multiplication} of a vector field with a spinor.
There are thus two generalized gradients: the \emph{Dirac operator} $D$, which is locally explicitly given by the formula: $D\varphi=\sum_{i=1}^{n}e_i\cdot\nabla_{e_i}\varphi, \quad \text{for all }\varphi\in\Gamma(\sigm)$ (where $\{e_i\}_{i=\overline{1,n}}$ is a local orthonormal basis and the middle dot is a simplified notation for the Clifford multiplication) and the \emph{twistor (Penrose) operator} $T$: $T_X\varphi=\nabla_X\varphi+\frac{1}{n}X\cdot D\varphi$.\\
For $n$ even, the spinor representation splits with respect to the action of the volume element into two irreducible subrepresentations, $\Sigma_n=\Sigma_n^{+}\oplus\Sigma_n^{-}$, of highest weights $(\frac{1}{2}, \dots, \frac{1}{2},\pm\frac{1}{2})$, whose elements are usually called \emph{positive}, respectively \emph{negative half-spinors}. Accordingly, there is a splitting of the spinor bundle: $\sigm=\mathrm{\Sigma^+ M} \oplus \mathrm{\Sigma^- M}$ and the decomposition of the tensor product is then given by: $\mathrm{T}^*\mathrm{M}\otimes \mathrm{\Sigma^{\pm} M}=\mathrm{\Sigma^{\mp} M}\oplus \ker(c)$.
Again the projections onto the first summand correspond to the Dirac operator and onto $\ker(c)$ to the twistor operator.
\end{Example}

\begin{Example}[Rarita-Schwinger Operator]\label{explrs}
Let $n\geq 3$ be odd and consider the so-called {\it twistor bundle}, which is the target bundle of the twistor operator acting on spinors, denoted by $\mathrm{\Sigma_{3/2} M}$. This is the vector bundle associated to the irreducible $\Spin(n)$-representation of highest weight $(\frac{3}{2},\frac{1}{2}, \cdot,\frac{1}{2})$. If $n\geq 5$, it follows from the selection rule in Lemma~\ref{selectrule} that there are four relevant weights: $0$, $-\varepsilon_1$, $+\varepsilon_1$, $+\varepsilon_2$ and the corresponding four gradient targets are: $\mathrm{\Sigma_{3/2} M}$ itself, the spinor bundle $\sigm$,  the associated vector bundles to the irreducible representations of highest weights $(\frac{5}{2}, \frac{1}{2},\dots,\frac{1}{2})$, respectively $(\frac{3}{2},\frac{3}{2},\frac{1}{2},\dots,\frac{1}{2})$. If $n=3$ the last of these targets is missing. The generalized gradient corresponding to the relevant weight $\varepsilon=0$ is denoted by $D_{3/2}:=P^{(3/2,1/2, \dots, 1/2)}_{0}$. This operator is well-known especially in the physics literature and is called the \emph{Rarita-Schwinger operator}.\\
If $n$ is even, $n=2m$, then the two bundles defined by the Cartan summand in $\mathrm{T}^*\mathrm{M}\otimes \mathrm{\Sigma^{\pm} M}$ have highest weights $(\frac{3}{2},\frac{1}{2}, \dots, \frac{1}{2}, \pm\frac{1}{2})$ and the corresponding Rarita-Schwinger operators are the generalized gradients denoted by: $D_{3/2}^{\pm}=P^{(3/2,1/2, \dots, 1/1, \pm 1/2)}_{\mp \varepsilon_m}:\Gamma(\mathrm{\Sigma_{3/2}^{\pm}M})\to\Gamma(\mathrm{\Sigma_{3/2}^{\mp}M})$.
\end{Example}

Essentially the same construction as above may be used to define generalized gradients associated to a $G$-structure. For a study of these $G$-generalized gradients, where $G$ is one of the subgroups of $\SO(n)$ from Berger's list of holonomy groups, we refer the reader \emph{e.g.} to \cite{mpth}.

\section{Branson's Classification of Elliptic Generalized Gradients}\label{brclass}

In this section we present Branson's classification result of \mbox{second} order elliptic operators that naturally arise from generalized gradients. We begin by briefly explaining the notions of ellipticity needed in the sequel and analyze the main properties of these operators given as linear combinations of generalized gradients composed with their formal adjoints. In Theorems~\ref{brclassodd} and~\ref{brclassev} we then state the classification result of Branson, \cite{br1}, of all operators of this type which are elliptic. In particular, this classification shows that ellipticity is attained by assembling surprisingly few generalized gradients. 

If $E$ and $F$ are smooth vector bundles over the manifold $M$ and \mbox{$P\!:\!\Gamma(E)\!\to\!\Gamma(F)$} is a linear differential operator of order $k$, then at every point $x\in M$ and for every $\xi\in T_x^*M$ there is associated an algebraic object, the \emph{principal symbol} $\sigma_\xi(P;x)$, defined invariantly as follows. Let $E_x$ and $F_x$ be the fibers of $E$ and $F$ at $x\in M$, let $u\in\Gamma(E)$ with $u(x)=z$ and $\varphi\in C^\infty(M)$ such that $\varphi(x)=0$, $d\varphi(x)=\xi$, then $\sigma_\xi(P;x):E_x\to F_x$ is the following endomorphism
\begin{equation}\label{defprsymb}
\sigma_\xi(P;x)z=\frac{i^k}{k!}P(\varphi^ku)|_x.
\end{equation}

\begin{Example}\label{prsymb}
On a vector bundle $E$ over a manifold $M$, for any sections $\varphi\in C^\infty(M)$, $u\in\Gamma(E)$, a connection \mbox{$\nabla:\Gamma(E)\to\Gamma(\mathrm{T^*M}\otimes E)$} satisfies $\nabla(\varphi u)=d\varphi\otimes u+\varphi\nabla u$. Consequently, the principal symbol of $\nabla$ is given by:  $\sigma_{\xi}(\nabla)u=i\xi\otimes u$. Alternatively, in the convention without the coefficient $i^k$, the principal symbol is just the identity: $\sigma=\id :\Gamma(\mathrm{T^*M}\otimes E)\to \Gamma(\mathrm{T^*M}\otimes E)$.\\
It follows that the principal symbol of any generalized gradient, which is defined by \eqref{defgg} as a projection of a metric connection onto an irreducible subbundle: $P_\varepsilon:=\Pi_\varepsilon\circ\nabla$, is given exactly by the projection $\Pi_\varepsilon$ defining it, $\sigma_{P_\varepsilon}=\Pi_\varepsilon:\Gamma(\mathrm{T^*M}\otimes V_\lambda M)\to \Gamma(V_{\lambda+\varepsilon} M)$.
\end{Example}

\begin{Definition}\label{defell}
A linear differential operator $P:\Gamma(E)\to \Gamma(F)$ is \emph{elliptic at a point $x\in M$} if its principal symbol $\sigma_\xi(P;x)$ is an isomorphism for every real section $\xi\in \mathrm{T}^*_x\mathrm{M}\setminus\{0\}$. $P$ is \emph{elliptic} if it is elliptic at all points $x\in M$.
\end{Definition}

\begin{Example}
Classical examples of elliptic operators are $\bar\partial$, the Cauchy-Riemann operator acting on complex-valued functions (or more general on forms of type $(0,q)$ on a complex manifold), which is a first order operator and the Laplacian $\Delta$ acting on $p$-forms, which is of second order. These are special cases of elliptic operators obtained from generalized gradients.
\end{Example}

Ellipticity is an algebraic property of a differential operator which implies analytic consequences. The theory of linear elliptic operators is very important and highly-developed. However, in our context we consider a special case of elliptic operators and, as we shall see, the problem may be, without loss of generality, reduced to analyzing first order differential operators.  Obviously, from Definition~\ref{defell}, a necessary condition for the existence of an elliptic operator between two vector bundles is that they have the same rank. So that, in order to talk about the ellipticity of generalized gradients acting between irreducible vector bundles of (usually) different ranks, we need to consider the following notion of ellipticity.

\begin{Definition}\label{injell}
A linear differential operator $P:\Gamma(E)\to \Gamma(F)$ is \emph{underdetermined elliptic at a point $x\in M$} if its symbol $\sigma_\xi(P;x)$ is surjective for every real section $\xi\in \mathrm{T}^*_x\mathrm{M}\setminus\{0\}$. $P$ is \emph{overdetermined} \emph{elliptic at a point $x\in M$} if $\sigma_\xi(P;x)$ is injective for every real section $\xi\in \mathrm{T}^*_x\mathrm{M}\setminus\{0\}$. $P$ is called \emph{(injectively) strongly elliptic} if $\sigma_\xi(P;x)$ is injective for every complex cotangent vector $\xi\in (\mathrm{T}^*_x\mathrm{M})^{\mathbb{C}}\setminus\{0\}$. 
\end{Definition}

\begin{Remark}
Since the principal symbol of a generalized gradient $P_\varepsilon$ is given by the projection $\Pi_\varepsilon$ defining it, the above notion of ellipticity may be easily rephrased in terms of this projection as follows: $P_\varepsilon$ is underdetermined (respectively overdetermined) elliptic if and only if the map $\Pi_\varepsilon\circ (\xi\otimes\cdot):V_\lambda\to V_{\lambda+\varepsilon}$ is surjective (respectively injective), for each nonzero section $\xi\in\Gamma(\mathrm{T}^*_x\mathrm{M})$. Thus, the generalized gradient $P_\varepsilon$ is (strongly) injectively elliptic if and only if  $\Pi_\varepsilon$ is non-vanishing on each decomposable element, \emph{i.e.}
\[\Pi_\varepsilon(\xi\otimes v)=0 \Rightarrow \xi\otimes v=0,\]
where $\xi\in\Gamma(\mathrm{T^*M})$ (respectively $\xi\in\Gamma((\mathrm{T^*M})^{\mathbb{C}})$) and $v\in\Gamma(V_\lambda M)$.
\end{Remark}

\begin{Remark}
A strongly elliptic operator is obviously elliptic. The converse is not true and a counterexample is provided by the Dirac operator $D$ on a spin manifold, whose principal symbol is given by the Clifford multiplication: $\sigma_\xi(D)(\varphi)=\xi\cdot\varphi$.
\end{Remark}

The general setting considered in the sequel is the following: $(M,g)$ is a Riemannian (spin) manifold, $\lambda$ is a dominant weight of $\so(n)$ and $V_\lambda M$ is the associated vector bundle to the irreducible representation of highest weight $\lambda$. For any subset $I$ of the set of relevant weights of $\lambda$, which is completely determined by the selection rule in Lemma~\ref{selectrule}, denote by $D_I$ the following differential operator:
\begin{equation}\label{opdj}
D_I:=\sum_{\varepsilon\in I}P^*_\varepsilon P^{\phantom{*}}_\varepsilon,
\end{equation}
where $P_\varepsilon:=\Pi_\varepsilon\circ\nabla$ is the generalized gradient. It is then natural to ask:

\begin{Question}\label{questell}
Given $\lambda$, for which subsets $I$ is the operator $D_I$ elliptic?
\end{Question}

The complete answer to this question was given by Branson, \cite{br1}. In this section we restate in this notation his result and in \S ~\ref{newpf} we give a new proof of it.

First we notice that Question~\ref{questell} regarding second order differential operators may be reduced to first order ones. If we denote by $P_I$ the following first order operator:
\begin{equation}\label{oppj}
P_I:=\sum_{\varepsilon\in I} P_\varepsilon,
\end{equation}
then $D_I=P_I^*P_I^{\phantom{*}}$ and the following equivalence holds:

\begin{Lemma}\label{equivell}
The operator $D_I$ is elliptic if and only if $P_I$ is \mbox{injectively elliptic}.
\end{Lemma}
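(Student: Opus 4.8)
The plan is to analyze the principal symbol of $D_I$ directly and show that its being an isomorphism is equivalent to injectivity of the symbol of $P_I$. First I would fix a point $x\in M$ and a nonzero real covector $\xi\in\mathrm{T}^*_x\mathrm{M}$, and compute the principal symbol of $D_I=P_I^*P_I$. Since $P_I$ is first order with symbol $\sigma_\xi(P_I)=\sum_{\varepsilon\in I}\Pi_\varepsilon\circ(\xi\otimes\cdot)$ (using Example~\ref{prsymb} and the fact that the symbol of a sum is the sum of the symbols), and since taking the formal adjoint of a differential operator corresponds to taking the adjoint of the principal symbol (with an appropriate sign that does not affect the argument), the symbol of the second order operator $D_I$ is the composition $\sigma_\xi(D_I)=\sigma_\xi(P_I)^*\circ\sigma_\xi(P_I):V_\lambda\to V_\lambda$. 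I should make the convention about the factor $i^k$ explicit, as in Example~\ref{prsymb}, so that these compositions work out cleanly; the upshot is that $\sigma_\xi(D_I)$ is, up to a nonzero constant, the endomorphism $S^*S$ where $S:=\sigma_\xi(P_I)$.

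The key step is then the elementary linear algebra fact that for a linear map $S:V\to W$ between finite-dimensional inner product spaces, the composition $S^*S:V\to V$ is an isomorphism if and only if $S$ is injective. Indeed $\ker(S^*S)=\ker S$ since $\langle S^*Sv,v\rangle=\|Sv\|^2$, and an endomorphism of a finite-dimensional space is an isomorphism precisely when it is injective. Applying this with $S=\sigma_\xi(P_I;x)$ gives that $\sigma_\xi(D_I;x)$ is an isomorphism if and only if $\sigma_\xi(P_I;x)$ is injective. Quantifying over all $x\in M$ and all nonzero real $\xi\in\mathrm{T}^*_x\mathrm{M}$, and invoking Definitions~\ref{defell} and~\ref{injell}, yields exactly the stated equivalence: $D_I$ is elliptic if and only if $P_I$ is injectively (overdetermined) elliptic.

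I do not expect a serious obstacle here; this is a standard reduction. The only point requiring a little care is the bookkeeping around formal adjoints and the symbol convention: one must check that $\sigma_\xi(P^*)$ is (up to sign and the $i^k$ factor) the fiberwise adjoint of $\sigma_\xi(P)$, which follows by integration by parts against compactly supported test sections and is where the hermitian fiber metrics on $V_\lambda M$ and on $\mathrm{T}^*\mathrm{M}\otimes V_\lambda M$ enter. Since the subsequent sections work with the algebraic symbols $\Pi_\varepsilon\circ(\xi\otimes\cdot)$ anyway, it is worth stating clearly that from now on "elliptic" for a sum of gradients means that $\sum_{\varepsilon\in I}\Pi_\varepsilon(\xi\otimes v)=0$ forces $v=0$ for all nonzero real $\xi$, so that Question~\ref{questell} is genuinely a question about the first order operators $P_I$ and hence amenable to the representation-theoretic and Kato-constant methods developed in \S\ref{newpf}.
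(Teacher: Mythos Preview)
Your argument is correct and is exactly the paper's approach: the paper simply records that $\sigma_{P_I^*P_I}=(\sigma_{P_I})^*\circ\sigma_{P_I}$ and leaves the elementary linear-algebra step ($S^*S$ is an isomorphism iff $S$ is injective) implicit. Your write-up just makes explicit what the paper states in one line.
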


This equivalence is a direct consequence of the behavior of principal symbols, namely that: $\sigma_{P_I^*P_I}=(\sigma_{P_I})^*\circ\sigma_{{P_I}^{\phantom{*}}}$, where $(\sigma_{P_I})^*$ is the Hermitian adjoint of $\sigma_{P_I}$. In the sequel we shall shortly say that $P_I$ is elliptic instead of injectively elliptic.

It follows that $D_I$ is elliptic if and only if the projection
\begin{equation}\label{prj}
\Pi_I:=\sum_{\varepsilon\in I}\Pi_\varepsilon: T\otimes V_\lambda \to \underset{\varepsilon\in I}{\oplus} V_{\lambda+\varepsilon}
\end{equation}
is injective when restricted to the set of decomposable elements in $T\otimes V_\lambda$. Thus, the ellipticity of the operators $D_I$ is reduced to a question about the representation theory of $\so(n)$, without reference to any particular manifold. This remark is the starting point in the original proof given by Branson for the classification of elliptic Stein-Weiss operators.

The fact that each projection in \eqref{prj} is onto a different direct summand has the following straightforward, but important consequences:

(1)  If instead of the operators $P_I$ given by \eqref{prj}, we consider, more general\-ly, operators of the form $\sum_{\varepsilon\in I}a_\varepsilon P_\varepsilon$ with nonzero coefficients, then such an operator is elliptic if and only if $P_I$ is. Thus, the ellipticity only depends on the subset $I$ and not on the coefficients, unlike in the case of \wb formulas, where these coefficients play a very important role.

(2) If $I_1\subset I_2$ and $P_{I_1}$ is elliptic, then also $P_{I_2}$ is elliptic. Hence the interesting operators are the \emph{minimal elliptic} operators $P_I$, \emph{i.e.} such that there is no proper subset of $I$ still defining an elliptic operator. It is this set of minimal elliptic operators that was determined by Branson. In a certain sense, the bigger the set $I$ is, the greater is the probability for $P_I$ to be elliptic. For instance, if $I$ is the whole set of weights of the standard representation, then the operator is the \emph{rough Laplacian} $\nabla^*\nabla$, which is, of course, elliptic.

The following partial results concerning Question~\ref{questell} have been shown prior to Branson's classification. Recall that the \emph{Cartan summand} of two irreducible \mbox{representations} $\lambda$ and $\mu$ is the subrepresentation of $\lambda\otimes\mu$ of highest weight $\lambda+\mu$.

\begin{Proposition}[Kalina, Pierzchalski and Walczak, \cite{kpw}]\label{topgrad}
For any irreducible representation $\lambda$, the projection onto the Cartan summand of $\tau\otimes\lambda$ defines a strongly elliptic first order differential operator, also called \emph{top gradient}, and this is the only generalized gradient with this property.
\end{Proposition}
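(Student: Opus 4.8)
The plan is to reduce the statement to representation theory of $\so(n)$ by means of the characterisation in Definition~\ref{injell} and the Remark following it: the generalized gradient $P_\varepsilon=\Pi_\varepsilon\circ\nabla$ is strongly elliptic if and only if $\Pi_\varepsilon(\xi\otimes v)\ne 0$ for every pair of nonzero \emph{complex} vectors $\xi\in\tau$, $v\in\lambda$. Writing $\tau\otimes\lambda=\bigoplus_{\varepsilon\subset\lambda}(\lambda+\varepsilon)$ as in \eqref{decompirred} and letting $\varepsilon_1$ be the relevant weight for which $\lambda+\varepsilon_1$ is the Cartan summand, the Proposition amounts to: \emph{(a)} $\Pi_{\varepsilon_1}$ does not vanish on any nonzero decomposable element of $\tau\otimes\lambda$; and \emph{(b)} for each relevant weight $\varepsilon\ne\varepsilon_1$, the projection $\Pi_\varepsilon$ kills some nonzero decomposable element. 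Everything below is over $\C$, and we may assume $n\ge 3$ (for $n\le 2$ the statement is vacuous).

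I would prove \emph{(b)} first. Fix a highest weight vector $\xi_0\in\tau$ (of weight $\varepsilon_1$; it is isotropic, $\<\xi_0,\xi_0>=0$, because its weight is nonzero while $\<\cdot,\cdot>$ pairs the $\mu$- and $(-\mu)$-weight spaces) together with the highest weight vector $v_\lambda^+\in\lambda$. Since $\xi_0$ and $v_\lambda^+$ are annihilated by the positive nilradical and are weight vectors, $\xi_0\otimes v_\lambda^+$ is a highest weight vector of $\tau\otimes\lambda$ of weight $\lambda+\varepsilon_1$; as the decomposition \eqref{decompirred} is multiplicity-free, it must lie inside the single summand $\lambda+\varepsilon_1$. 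Hence $\Pi_{\varepsilon_1}(\xi_0\otimes v_\lambda^+)=\xi_0\otimes v_\lambda^+\ne 0$, whereas $\Pi_\varepsilon(\xi_0\otimes v_\lambda^+)=0$ for every relevant $\varepsilon\ne\varepsilon_1$. Since $\xi_0\otimes v_\lambda^+\ne 0$, the symbol $\sigma_{\xi_0}(P_\varepsilon)$ fails to be injective, so no $P_\varepsilon$ with $\varepsilon\ne\varepsilon_1$ is strongly elliptic.

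For \emph{(a)}, set $\mathcal U:=\{\xi\in\tau:\Pi_{\varepsilon_1}(\xi\otimes\cdot)\colon\lambda\to\lambda+\varepsilon_1\text{ is injective}\}$. From the $\SO(n,\C)$-equivariance of $\Pi_{\varepsilon_1}$ one gets $\sigma_{g\xi}(P_{\varepsilon_1})=g\circ\sigma_\xi(P_{\varepsilon_1})\circ g^{-1}$, so $\mathcal U$ is invariant under $\SO(n,\C)$ and under scaling; it is also Zariski open (injectivity of a linearly parametrised family of linear maps is the non-vanishing of some maximal minor) and nonempty. The crucial point is that $\xi_0\in\mathcal U$: the stabiliser $\mathfrak p\subset\so(n,\C)$ of the line $\C\xi_0$ is a parabolic subalgebra (it contains a Borel, $\xi_0$ being a highest weight vector) acting on $\xi_0$ through a character $\chi$, so for $X\in\mathfrak p$ equivariance gives $\Pi_{\varepsilon_1}(\xi_0\otimes Xv)=(X-\chi(X))\Pi_{\varepsilon_1}(\xi_0\otimes v)$; thus $\ker\bigl(\Pi_{\varepsilon_1}(\xi_0\otimes\cdot)\bigr)$ is a $\mathfrak p$-submodule, in particular a Borel-submodule, of the irreducible module $\lambda$, and if nonzero it would contain the unique highest weight line, contradicting $\Pi_{\varepsilon_1}(\xi_0\otimes v_\lambda^+)\ne 0$ from \emph{(b)}. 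Consequently $\mathcal U$ contains the $\SO(n,\C)$-orbit of the isotropic vector $\xi_0$, i.e. all nonzero isotropic vectors; and, being nonempty and Zariski open, it meets the dense set $\{\xi:\<\xi,\xi>\ne 0\}$, so by invariance and transitivity of $\SO(n,\C)\times\C^\times$ on that set it contains every non-isotropic vector too. Therefore $\mathcal U=\tau\setminus\{0\}$: $\sigma_\xi(P_{\varepsilon_1})$ is injective for all nonzero complex $\xi$, i.e. the top gradient is strongly elliptic.

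The main obstacle is \emph{(a)} in the \emph{isotropic} complex directions. For real (or, more generally, non-isotropic) $\xi$ the injectivity of the top gradient's symbol is essentially the classical Stein--Weiss estimate and can be seen by a weight-space computation; for an isotropic $\xi$ this breaks down and one genuinely needs the parabolic input above -- the relevant feature being that the stabiliser of an isotropic line is \emph{parabolic}, so that kernels of the associated equivariant maps are Borel-stable and hence detected by the highest weight, whereas the stabiliser of a non-isotropic line is reductive and provides no such leverage. Step \emph{(b)} is, by comparison, immediate once one examines the highest weight vector of $\tau\otimes\lambda$.
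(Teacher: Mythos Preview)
The paper does not give its own proof of this Proposition; it is quoted as a result of \cite{kpw} (and the paper remarks that \cite{kpw} actually proves a more general statement for arbitrary compact semisimple groups). The paper's own techniques, via the Kato-constant bound in Proposition~\ref{corcgh}, recover only the \emph{injective} ellipticity of $P_{\{1\}}$ for real covectors, not the strong ellipticity over complex covectors, and do not touch the uniqueness assertion.

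Your argument is correct and self-contained. Part~(b) is the standard observation that the tensor of highest weight vectors lies in the Cartan summand. For part~(a) your key step is the right one: at the isotropic highest-weight direction $\C\xi_0$ the stabiliser in $\so(n,\C)$ is parabolic, so the kernel of $\Pi_{\varepsilon_1}(\xi_0\otimes\cdot)$ is a $\mathfrak b$-submodule of the irreducible module $\lambda$; since any nonzero $\mathfrak b$-submodule of a finite-dimensional irreducible module is $\mathfrak h$-stable and hence a sum of weight spaces, a weight vector of maximal weight in it is killed by all positive root vectors and therefore equals $v_\lambda^+$ up to scalar --- contradicting $\Pi_{\varepsilon_1}(\xi_0\otimes v_\lambda^+)\ne 0$. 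The Zariski-openness and $\SO(n,\C)\times\C^\times$-invariance of $\mathcal U$ then spread injectivity from $\xi_0$ to all nonzero $\xi$ (using that for $n\ge 3$ the nonzero isotropic vectors form a single $\SO(n,\C)$-orbit, and the non-isotropic ones a single $\SO(n,\C)\times\C^\times$-orbit). This is essentially the representation-theoretic mechanism behind the cited result, and nothing in your write-up is missing; the one implicit step worth making explicit is the sentence just given about why a nonzero Borel-submodule must contain the highest weight line.
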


In fact, in \cite{kpw} is proven a  more general version of Proposition~\ref{topgrad}, for the irreducible decomposition of the tensor product of two irreducible representations of any compact semisimple Lie group: among all operators defined by projections from a tensor product to its irreducible subbundles, only the one given by the projection onto the Cartan summand is strongly elliptic. For example, from Proposition~\ref{topgrad}, it follows that the twistor operator $T$ acting on $p$-forms (see Example~\ref{pforms}) is strongly elliptic.

\begin{Proposition}[Stein and Weiss, \cite{sw}]\label{resttop}
For any irreducible re\-presentation $\lambda$, the projection onto the complement of the Cartan summand, \emph{i.e.} when the set $I$ is equal to the whole set of relevant weights except for the highest weight of $\tau$, defines an elliptic operator $P_I$. 
\end{Proposition}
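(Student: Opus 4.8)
The plan is to rephrase the statement in terms of $\so(n)$-representation theory and then to test the splitting \eqref{decompirred} against a single equivariant operator. By the Remark following Lemma~\ref{equivell}, $P_I$ is elliptic if and only if the projection $\Pi_I=\sum_{\e\in I}\Pi_\e$ is injective on the set of decomposable elements $\xi\otimes v\in\tau\otimes V_\lambda$ with $\xi\in\R^n$ a nonzero \emph{real} covector. Since $I$ consists of all relevant weights of $\lambda$ with the single exception of $\e_1$ (the highest weight of $\tau$), the kernel of $\Pi_I$ is exactly the Cartan summand $V_{\lambda+\e_1}$ of \eqref{decompirred}. Thus everything reduces to the purely algebraic claim: \emph{the Cartan summand $V_{\lambda+\e_1}$ contains no nonzero decomposable element $\xi\otimes v$ with $\xi$ real.} (We may assume $\lambda\neq 0$, for otherwise $I=\emptyset$ and there is nothing to prove.)

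To establish this I would use the $\so(n)$-equivariant endomorphism
\[B:=\sum_{i<j}(e_i\wedge e_j)\otimes(e_i\wedge e_j)\in\End(\tau\otimes V_\lambda),\]
where $e_i\wedge e_j$ acts on each tensor factor through the corresponding representation. Up to the normalisation constant fixed in \S\ref{sectgengrad} one has $2B=\Cas_{\tau\otimes V_\lambda}-\Cas_{\tau}-\Cas_{V_\lambda}$; hence $B$ is self-adjoint and $\so(n)$-equivariant, and by Schur's lemma together with the multiplicity-freeness of \eqref{decompirred} it acts as a real scalar $b_\e$ on each summand $V_{\lambda+\e}$. Two short computations drive the argument. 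First, each $e_i\wedge e_j$ acts on the standard representation by a real skew-symmetric endomorphism, so $\<(e_i\wedge e_j)\xi,\xi>=0$ for every real $\xi$, and therefore $\<B(\xi\otimes v),\xi\otimes v>=\sum_{i<j}\<(e_i\wedge e_j)\xi,\xi>\,\<(e_i\wedge e_j)v,v>=0$ for every real $\xi$ and every $v$. Second, the eigenvalue $b_{\e_1}$ on the Cartan summand is nonzero: inserting $\mu=\lambda+\e_1$ into the Casimir eigenvalue (which for the chosen normalisation is proportional to $\<\mu,\mu+2\delta>$) and expanding, the terms containing the Weyl vector $\delta$ cancel against those coming from $\Cas_\tau$ and $\Cas_{V_\lambda}$, leaving $b_{\e_1}=\pm\lambda_1\neq 0$.

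Combining the two facts finishes the proof. For a real $\xi$ set $p_\e:=\|\Pi_\e(\xi\otimes v)\|^2$; since the $\Pi_\e$ are mutually orthogonal projections and $B$ acts by $b_\e$ on $V_{\lambda+\e}$, the first computation gives $\sum_{\e\subset\lambda}b_\e\,p_\e=0$, whereas \eqref{decompirred} gives $\sum_{\e\subset\lambda}p_\e=|\xi|^2\|v\|^2$. Isolating the Cartan term and bounding $\bigl|\sum_{\e\in I}b_\e p_\e\bigr|\le M\sum_{\e\in I}p_\e$ with $M:=\max_{\e\in I}|b_\e|$, one gets $|b_{\e_1}|\,p_{\e_1}\le M\,(|\xi|^2\|v\|^2-p_{\e_1})$, hence $p_{\e_1}\le\frac{M}{|b_{\e_1}|+M}\,|\xi|^2\|v\|^2<|\xi|^2\|v\|^2$ whenever $\xi\otimes v\neq 0$, using $b_{\e_1}\neq 0$. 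Consequently $\|\Pi_I(\xi\otimes v)\|^2=|\xi|^2\|v\|^2-p_{\e_1}>0$, which is precisely the injectivity of $\Pi_I$ on real decomposable elements; by Lemma~\ref{equivell}, $P_I$ — and therefore $D_I$ — is elliptic. If one prefers ellipticity phrased as a uniform lower bound for the symbol on the unit sphere bundle, it follows immediately by compactness together with the $\SO(n)$-equivariance of the entire construction. The only point that genuinely requires care is the vanishing $\<B(\xi\otimes v),\xi\otimes v>=0$: it uses the reality of $\xi$ in an essential way, and in fact for a complex null covector the Cartan summand does contain nonzero decomposables — which is exactly why the top gradient by itself fails to be strongly elliptic, in accordance with Proposition~\ref{topgrad}.
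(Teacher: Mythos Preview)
Your argument is correct and self-contained. The paper does not supply its own proof of this proposition (it is simply attributed to Stein--Weiss), so there is nothing to compare line by line; however, it is worth noting that your operator $B$ is precisely the conformal weight operator of Definition~\ref{defconfop}, and your key identity $\langle B(\xi\otimes v),\xi\otimes v\rangle=0$ for real $\xi$ together with $b_{\e_1}=\lambda_1\neq 0$ is exactly the mechanism that, in the paper's language, forces the Kato constant of $P_I$ below $1$. In that sense your proof is a streamlined, direct version of the special case $I=\{2,\dots,N\}$ of Proposition~\ref{corcgh}, bypassing the linear-programming machinery by exploiting that a single linear relation among the $p_\e$ already suffices when only one projection is excluded.

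One slip in your closing remark: the existence of a complex decomposable $\xi_+\otimes v_\lambda$ in the Cartan summand (with $\xi_+$ a highest weight vector of $\tau$, hence null) shows that $\Pi_I(\xi_+\otimes v_\lambda)=0$, so it is \emph{$P_I$} that fails to be strongly elliptic --- not the top gradient. The top gradient $P_{\e_1}$ is strongly elliptic, which is exactly the content of Proposition~\ref{topgrad}; your sentence has this backwards. The correct moral is that Proposition~\ref{resttop} gives ellipticity but not strong ellipticity, complementing Proposition~\ref{topgrad}.
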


\begin{Example}
In Example~\ref{pforms} the complement of the Cartan projection defines the operator $P=d+\delta$, which, by Proposition~\ref{resttop}, is (injectively) elliptic and, by the above construction, just gives rise to the \emph{Laplacian} acting on $p$-forms: $\Delta=d\delta+\delta d=(d+\delta)^*(d+\delta)$.
\end{Example}

Branson's classification essentially says that the Laplacian is not a special case, but the generalized gradients usually break up into pairs or singletons which are elliptic. Before stating it, we give a graphical interpretation of the classical selection rule in Lemma~\ref{selectrule} for the special orthogonal group, which can be found in \cite{usgw2} and simplifies the task of finding the relevant weights. In our context the graphical interpretation is helpful to better visualize the classification of elliptic operators, which turns out to be strongly related to the selection rule.

First we consider the case of $\SO(n)$ or $\Spin(n)$ with $n$ odd, $n=2m+1$. We use the same notation as above: $\{\pm\e_1,\dots,\pm\e_m,0\}$ are the weights of the defining complex representation $\tau$ of $\SO(n)$ and $V_\l$ is the irreducible representation of highest weight $\l=(\l_1\geq\cdots\geq\l_{m-1}\geq \l_m\geq 0)\in \mathbb{Z}^m\cup (\frac{1}{2}+\mathbb{Z})^m$, where $\l_i$ are the coordinates of $\l$ with respect to the orthonormal basis $\{\e_1,\dots,\e_m\}$. 

\begin{figure}[h]\caption{\label{figsro} Selection Rule for $\SO(2m+1)$ or $\Spin(2m+1)$}
\begin{center}\begin{picture}(430,150)(0,-20)
  \linethickness{.8pt}
  \rectangle(  0, 90)[ 45, 20]{Gray}
  \recttext (  0, 90)[ 45, 20]{$\e_1$}
  \linethickness{.2pt}
  \rectangle( 45, 70)[ 45, 40]{White}
  \recttext ( 45,110)[ 45, 20]{$\l_1\!>\! \l_2$}
  \recttext ( 45, 90)[ 45, 20]{$-\e_1$}
  \recttext ( 45, 70)[ 45, 20]{$\e_2$}
  \rectangle(90, 50)[ 45, 40]{White}
  \recttext (90, 90)[ 45, 20]{$\l_2\!>\! \l_3$}
  \recttext (90, 70)[ 45, 20]{$-\e_2$}
  \recttext (90, 50)[ 45, 20]{$\e_3$}
  \multiput (145, 60)(4,-1){10}{$\cdot$}
  \rectangle(205, 20)[ 45, 40]{White}
  \recttext (205, 60)[ 45, 20]{$\l_{m-2}\!>\! \l_{m-1}$}
  \recttext (205, 40)[ 45, 20]{$-\e_{m-2}$}
  \recttext (205, 20)[ 45, 20]{$\e_{m-1}$}
  \rectangle(250,  0)[ 45, 40]{White}
  \recttext (250, 40)[ 45, 20]{\quad $\l_{m-1}\!>\! \l_{m}$}
  \recttext (250, 20)[ 45, 20]{$-\e_{m-1}$}
  \recttext (250,  0)[ 45, 20]{$\e_m$}
  \rectangle(295,-20)[ 45, 40]{White}
  \recttext (295, 20)[ 45, 20]{$\l_m\!>\!0$}
  \recttext (295, 00)[ 45, 20]{$-\e_m$}
  \recttext (295,-20)[ 45, 20]{$0$}
  \rectangle(297,  0)[ 47, 18]{White}
  \recttext (345,  0)[ 45, 20]{$\l_m\!\geq\! 1$}
 \end{picture}\end{center}
\end{figure}

The decision criterion given in Lemma~\ref{selectrule} for the decomposition of the tensor product $\tau\otimes\lambda$ can be read in Diagram~\ref{figsro} featuring the weights of $\tau$ and labeled boxes. A weight $\varepsilon$ is relevant for an irreducible representation $\lambda$ if and only if the coordinates $\l_1,\dots,\l_m$ of $\lambda$ satisfy all the inequalities labeling the boxes containing $\varepsilon$.

\begin{Theorem}[Branson, \cite{br1}]\label{brclassodd}
Let $(M,g)$ be a Riemannian (spin) manifold of odd real dimension $n=2m+1$ and $V_\lambda M$ the associated vector bundle to an irreducible $\SO(n)$- (or $\Spin(n)$)-representation of highest weight $\lambda$. For any subset $I$ of the set of relevant weights of $\lambda$, the corresponding operator $P_I=\sum_{\varepsilon\in I}\Pi_{\varepsilon}\circ\nabla$ is a minimal elliptic operator if and only if $I$ is one of the following sets:
\begin{enumerate}
\item $\{\e_1\}$ (strongly elliptic),
\item $\{0\}$,  if $\l$ is properly half-integral,
\item $\{-\e_i, \e_{i+1}\}$, for $i=1,\dots,m-1$,
\item $\{-\e_m, 0\}$, if $\l$ is integral.
\end{enumerate}
\end{Theorem}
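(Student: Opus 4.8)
The plan is to work entirely inside the representation theory of $\so(n)$. By Lemma~\ref{equivell} and the remark following it, $P_I$ is elliptic if and only if the only decomposable element of $\tau\otimes V_\lambda$ lying in $\bigoplus_{\e\notin I}V_{\lambda+\e}$ is $0$; and since $\SO(n)$ acts transitively on the nonzero vectors up to scaling, it suffices to test decomposable elements of the form $e_n\otimes v$. Using the monotonicity observation of \S~\ref{brclass} and the elementary fact that for a fixed $\lambda$ the four sets of the statement form an antichain, the theorem reduces to two claims: \textbf{(i)} each of the four sets defines an elliptic operator; \textbf{(ii)} every $I$ containing none of the four sets defines a non-elliptic operator.

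For \textbf{(i)}: the set $\{\e_1\}$ labels the Cartan summand $V_{\lambda+\e_1}$, so $P_{\{\e_1\}}$ is strongly elliptic by Proposition~\ref{topgrad}. If $\lambda$ is properly half-integral, the summand $V_\lambda$ (relevant weight $0$) is cut out by a Clifford-type contraction, i.e.\ $\Pi_0(\xi\otimes v)$ equals, up to a nonzero scalar, $\xi\bullet v$ with $\xi\bullet(\xi\bullet v)=-|\xi|^2v$; hence $\Pi_0(\xi\otimes v)=0$ forces $v=0$ and $P_{\{0\}}$ is elliptic. For the pairs $\{-\e_i,\e_{i+1}\}$ and, in the integral case, $\{-\e_m,0\}$, I would adapt the analysis of Calderbank, Gauduchon and Herzlich \cite{cgh}, who described, in the computation of the optimal refined Kato constant, the set of achievable vectors $(x_\e)_\e$ with $x_\e=|\Pi_\e(\xi\otimes v)|^2/|v|^2$. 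Extended to all (not necessarily elliptic) $I$, this description decides ellipticity, since $P_I$ is elliptic exactly when no achievable vector is supported off $I$. The achievable set is cut down by the linear relation $\sum_\e w_\e\,x_\e=0$, where $w_\e$ is the conformal weight of $\e$, i.e.\ the eigenvalue on $V_{\lambda+\e}$ of the self-adjoint $\so(n)$-endomorphism $\sum_{i<j}(e_i\wedge e_j)\otimes(e_i\wedge e_j)$ of $\tau\otimes V_\lambda$ (for $\xi=e_n$ this endomorphism is orthogonal to $e_n\otimes v$), together with the further polynomial constraints of \cite{cgh}; working these out shows that no achievable vector is supported on the complement of $\{-\e_i,\e_{i+1}\}$, resp.\ of $\{-\e_m,0\}$. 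One family --- the last ``box'' of Diagram~\ref{figsro} in the half-integral case --- is not reached this way, and is precisely the case $I=\{0\}$ settled above.

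For \textbf{(ii)} I would use the branching rule for $\SO(n)\downarrow\SO(n-1)$ (resp.\ $\Spin(n)\downarrow\Spin(n-1)$), which is multiplicity-free. The stabiliser of $e_n$ is $\SO(n-1)$; the inclusion $\iota\colon V_\lambda\hookrightarrow\tau\otimes V_\lambda$, $v\mapsto e_n\otimes v$, and each $\Pi_\e$ are $\SO(n-1)$-equivariant, so $P_I$ is non-elliptic if and only if $\bigcap_{\e\in I}\ker(\Pi_\e\circ\iota)\neq 0$, and each $\Pi_\e\circ\iota\colon V_\lambda|_{\so(n-1)}\to V_{\lambda+\e}|_{\so(n-1)}$ respects the (at most one-dimensional) $\so(n-1)$-multiplicity spaces, hence is determined by which $\so(n-1)$-types it joins. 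Granting the non-degeneracy statement that $\Pi_\e\circ\iota$ is nonzero on every $\so(n-1)$-type common to its source and target --- provable by a highest-weight-vector computation, or read off the interlacing pattern encoded in Diagram~\ref{figsro} --- a short bookkeeping with the interlacing rule shows that whenever $I$ contains none of the four sets, some $\so(n-1)$-irreducible occurs in $V_\lambda|_{\so(n-1)}$ but in no $V_{\lambda+\e}|_{\so(n-1)}$ with $\e\in I$; any vector $v$ in the corresponding copy then satisfies $\Pi_\e(e_n\otimes v)=0$ for all $\e\in I$, so $P_I$ is not elliptic.

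The main obstacle I expect is \textbf{(ii)}: converting the branching data into the precise statement that the complements of the four minimal families are exactly the maximal non-elliptic sets, and in particular establishing the non-vanishing of the component maps $\Pi_\e\circ\iota$ in every case. The secondary difficulty is the exceptional half-integral family in \textbf{(i)}, where the Kato optimisation is inconclusive and the decomposition must be analysed by hand.
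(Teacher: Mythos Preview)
Your overall architecture---Kato/conformal-weight analysis from \cite{cgh} for ellipticity, branching rules for non-ellipticity---is exactly the paper's strategy (Propositions~\ref{corcgh} and~\ref{cormaxne}). The paper carries out your sketch in detail: it shows the upper bound $c_I<1$ for $I=\{1\}$, $I=\{\ell+1\}$, and $I=\{i,N+2-i\}$ directly from the formulas \eqref{Nodd2}--\eqref{Neven2}, and for non-ellipticity it exhibits, for each $J\in\NE$, an explicit $\so(n-1)$-type $\gamma$ occurring in $V_\lambda$ but in no $V_{\lambda+\e}$ with $\e\in J$. Your ``non-degeneracy'' hypothesis on $\Pi_\e\circ\iota$ is not needed for this direction: if a type of $V_\lambda$ is absent from every target, Schur alone kills the corresponding component, and that is all Lemma~\ref{necell} uses.

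The genuine gap is your treatment of the exceptional family $I=\{0\}$ for properly half-integral $\lambda$. You assert that $\Pi_0(\xi\otimes\,\cdot\,)$ satisfies a Clifford relation $\sigma_\xi\circ\sigma_\xi=-|\xi|^2\,\id$ up to scale. This is false already for the Rarita--Schwinger bundle $\lambda=(3/2,1/2,\ldots,1/2)$: the symbol there is the composition of Clifford multiplication with the projection back onto $\ker c\subset\tau\otimes\Sigma$, and that composite does not square to a scalar. More conceptually, if $\sigma$ satisfied the Clifford relation then $V_\lambda$ would carry a $Cl(n)$-module structure and hence, restricted to $\Spin(n)\subset Cl(n)^0$, would decompose as copies of the spinor $\Sigma$---which it does not once $\lambda\neq(1/2,\ldots,1/2)$. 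The paper is explicit about this (Remark~\ref{spcase}): the Kato/conformal-weight machinery sees only $\so(n)$-data and cannot separate the integral from the half-integral case at the zero weight, so this family is precisely the one \emph{not} recovered by the present method. Branson's original argument handles it via spectral computations on the sphere; your Clifford shortcut does not.
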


For the case $n$ even, $n=2m$, there is a similar graphical interpretation of the relevant weights of an irreducible representation $\l=(\l_1\geq\cdots\geq\l_{m-1}\geq |{\l_m}|)\in \mathbb{Z}^m\cup (\frac{1}{2}+\mathbb{Z})^m$, where again $\l_i$ are the coordinates of $\l$ with respect to the orthonormal basis $\{\e_1,\dots,\e_m\}$. 

\begin{figure}[h]\caption{\label{figsre} Selection Rule for $\SO(2m)$ or $\Spin(2m)$}
\begin{center}\begin{picture}(430,150)(0,-20)
  \linethickness{.8pt}
  \rectangle(  0, 90)[ 45, 20]{Gray}
  \recttext (  0, 90)[ 45, 20]{$\e_1$}
  \linethickness{.2pt}
  \rectangle( 45, 70)[ 45, 40]{White}
  \recttext ( 45,110)[ 45, 20]{$\l_1\!>\! \l_2$}
  \recttext ( 45, 90)[ 45, 20]{$-\e_1$}
  \recttext ( 45, 70)[ 45, 20]{$\e_2$}
  \rectangle(90, 50)[ 45, 40]{White}
  \recttext (90, 90)[ 45, 40]{$\l_2\!>\!\l_3$}
  \recttext (90, 70)[ 45, 20]{$-\e_2$}
  \recttext (90, 50)[ 45, 20]{$\e_3$}
  \multiput (145, 60)(4,-1){10}{$\cdot$}
  \rectangle(195, 20)[ 45, 40]{White}
  \recttext (195, 60)[ 45, 20]{$\l_{m-2}\!>\! \l_{m-1}$}
  \recttext (195, 40)[ 45, 20]{$-\e_{m-2}$}
  \recttext (195, 20)[ 45, 20]{$\e_{m-1}$}
  \rectangle(240,  0)[ 45, 40]{White}
  \recttext (242, 40)[ 45, 20]{\quad $\l_{m-1}\!>\! \l_{m}$}
  \recttext (240, 20)[ 45, 20]{$-\e_{m-1}$}
  \recttext (240,  0)[ 45, 20]{$\e_m$}
  \rectangle(242, 20)[ 82, 18]{White}
  \recttext (337, 20)[ 40, 20]{$\l_{m-1}\!>\!-\l_m$}
  \recttext (283, 20)[ 45, 20]{$-\e_m$}
 \end{picture}\end{center}
\end{figure}

\begin{Theorem}[Branson, \cite{br1}] \label{brclassev}
Considering the same assumptions as in Theorem~\ref{brclassodd}, but now on a Riemannian (spin) manifold of even real dimension $n=2m$, the operator $P_I=\sum_{\varepsilon\in I} \Pi_{\varepsilon}\circ\nabla$ is minimal elliptic if and only if $I$ is one of the following sets:
\begin{enumerate}
\item $\{\e_1\}$ (strongly elliptic),
\item $\{-\e_m\}$, if $\l_m>0$, 
\item $\{\e_m\}$, if $\l_m<0$, 
\item $\{-\e_i, \e_{i+1}\}$, for $i=1,\dots,m-2$,
\item $\{-\e_{m-1}, \e_m\}$, if $\l_m\geq 0$, 
\item $\{-\e_{m-1}, -\e_m\}$, if $\l_m\leq 0$.
\end{enumerate}
\end{Theorem}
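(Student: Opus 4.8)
The plan is to follow the two-step strategy announced in the introduction, working throughout with the $\SO(n)$-representation $\tau\otimes V_\lambda$. By Lemma~\ref{equivell} and the remarks after it, $D_I$ is elliptic if and only if no nonzero decomposable element $\xi\otimes v$ with $\xi$ a \emph{real} covector lies in the complementary subspace $U_I:=\bigoplus_{\varepsilon\notin I}V_{\lambda+\varepsilon}$. As this condition is $\SO(n)$-invariant and $\SO(n)$ acts transitively on the unit sphere of $\tau$, we may fix $\xi=e_n$, whose stabiliser is $\SO(n-1)$. The map $v\mapsto e_n\otimes v$ is then $\SO(n-1)$-equivariant, so, decomposing $V_\lambda$ and each $V_{\lambda+\varepsilon}$ under $\SO(n-1)$ --- a multiplicity-free decomposition by the classical branching rule --- for $v$ in the $\mu$-isotypic part of $V_\lambda$ the vector $\Pi_\varepsilon(e_n\otimes v)$ lies in the $\mu$-isotypic part of $V_{\lambda+\varepsilon}$, and in particular vanishes whenever $\mu$ does not occur in $V_{\lambda+\varepsilon}|_{\SO(n-1)}$. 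This is the one algebraic fact feeding the completeness step.

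\emph{First, that the listed sets are elliptic.} Following Calderbank--Gauduchon--Herzlich \cite{cgh}, I would introduce the $\SO(n)$-equivariant conformal-weight operator on $\tau\otimes V_\lambda$, which acts on the summand $V_{\lambda+\varepsilon}$ by the scalar $w_\varepsilon=\tfrac12\bigl(\Cas(\lambda+\varepsilon)-\Cas(\lambda)-\Cas(\tau)\bigr)$; the conformal weights $w_\varepsilon$ are pairwise distinct and explicit in $\lambda$. Extending the \cite{cgh} computation --- which assumed ellipticity --- to an arbitrary subset $I$, one expresses the optimal constant $k_I$ in the refined Kato inequality $|d|\phi||\le k_I|\nabla\phi|$ (valid for $\phi$ annihilated by all $P_\varepsilon$ with $\varepsilon\notin I$) as an explicit function of $\{w_\varepsilon\}_{\varepsilon\notin I}$. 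Examining when $k_I<1$ then shows that each set $I$ in Theorems~\ref{brclassodd} and~\ref{brclassev} defines an elliptic operator and identifies the elliptic one-element sets --- namely $\{\varepsilon_1\}$ (strongly elliptic, in accordance with Proposition~\ref{topgrad}) and, according to parity and the sign of $\lambda_m$, one of $\{0\}$, $\{-\varepsilon_m\}$, $\{\varepsilon_m\}$ --- all of this with a single exceptional operator for which the Kato method is inconclusive and whose ellipticity I would simply record from \cite{br1}.

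\emph{Next, that there are no others.} For each irreducible $\mu$ occurring in $V_\lambda|_{\SO(n-1)}$ set $S_\mu:=\{\varepsilon\in R:\mu\subset V_{\lambda+\varepsilon}|_{\SO(n-1)}\}$, where $R$ is the set of relevant weights of $\lambda$. By the reduction of the first paragraph, if $I\cap S_\mu=\emptyset$ for some such $\mu$ then $e_n\otimes v\in U_I$ for every nonzero $v$ in the $\mu$-isotypic part of $V_\lambda$, so $D_I$ is not elliptic; hence every index set of an elliptic operator is a transversal of the family $\{S_\mu\}_\mu$. Using the classical branching rule for $\SO(n)\downarrow\SO(n-1)$ --- the interlacing inequalities relating $\mu$ to $\lambda$ and to each $\lambda+\varepsilon$, read off from the two selection-rule diagrams --- I would compute the sets $S_\mu$ and check that the minimal transversals of $\{S_\mu\}_\mu$ are precisely the sets listed in Theorems~\ref{brclassodd} and~\ref{brclassev}. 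This finishes the proof: each listed set $L$ is a minimal transversal, so no proper subset of $L$ is a transversal and hence none is elliptic, so $D_L$ --- elliptic by the first step --- is minimal elliptic; conversely, if $D_I$ is minimal elliptic then $I$ is a transversal and thus contains a minimal transversal $T$, which is one of the listed sets with $D_T$ elliptic, forcing $I=T$.

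The main obstacle is this branching analysis. The sets $S_\mu$, and hence their minimal transversals, depend through the interlacing conditions on a delicate case distinction --- which coordinates $\lambda_i$ coincide, whether $\lambda_m$ is zero, positive or negative, and whether $\lambda$ is integral or properly half-integral --- and matching the outcome against the various subcases in the statements of Theorems~\ref{brclassodd} and~\ref{brclassev} is the real content of the argument. The conformal-weight computation of the first step is also somewhat technical, but it is essentially a careful extension of \cite{cgh}; the price for dispensing with the harmonic-analysis methods of \cite{br1} is precisely the one operator whose ellipticity the Kato method cannot decide on its own.
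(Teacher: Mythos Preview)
Your proposal is correct and follows essentially the same two-step strategy as the paper: the Kato-constant computation of Calderbank--Gauduchon--Herzlich (extended beyond the elliptic case) to establish ellipticity of the listed sets, and the branching rule for $\SO(n)\downarrow\SO(n-1)$ combined with the $\SO(n-1)$-equivariance of $v\mapsto e_n\otimes v$ to establish completeness. Your framing of the second step via minimal transversals of the family $\{S_\mu\}_\mu$ is the combinatorial dual of what the paper actually does: rather than computing all the $S_\mu$ and extracting minimal transversals, the paper works with the candidate \emph{maximal non-elliptic} index sets $J\in\NE$ (one element from each pair $\{i,N+2-i\}$) and, for each such $J$, writes down a single explicit $\SO(n-1)$-highest weight $\gamma_J$ with $J\cap S_{\gamma_J}=\emptyset$. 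This is less work than tabulating every $S_\mu$, and it suffices because the elementary combinatorics of the pairs $\{i,N+2-i\}$ guarantees that any set not contained in some $J\in\NE$ already contains a listed set; your transversal argument is then just the contrapositive of this. For Theorem~\ref{brclassev} in particular (even $n$) there is no exceptional case, so the caveat you record about a single operator borrowed from \cite{br1} is needed only for Theorem~\ref{brclassodd}.
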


Theorems~\ref{brclassodd} and~\ref{brclassev} give a complete answer to the Question~\ref{questell}, by restating the results in terms of the second order differential operators $D_I=P_I^*P_I^{\phantom{*}}$. Note that in the list of minimal elliptic operators no operator $P_\varepsilon^*P_\varepsilon^{\phantom{*}}$ appears twice, except for $P_{-\e_{m-1}}^*P_{-\e_{m-1}}^{\phantom{*}}$ in the case when $n$ is even and $\lambda_m=0\neq\lambda_{m-1}$. The list is also exhaustive, \emph{i.e.} each $P_\varepsilon^*P_\varepsilon^{\phantom{*}}$ occurs, except for $n$ odd and $\lambda$ properly half-integral, when $P_{-\e_{m}}^*P_{-\e_{m}}^{\phantom{*}}$ does not occur in the list. Thus, apart from these exceptions, the subsets $I$ defining the minimal elliptic operators form a partition of the set of weights of the standard representation $\tau$.

\begin{Remark}
A priori it is not clear that the ellipticity of the operator $D_I:V_\lambda M\to V_\lambda M$ defined by a certain subset $I$ is independent of the given highest weight $\lambda$ (of course here are considered only the highest weights for which all the elements in $I$ are relevant weights). This follows from Theorems~\ref{brclassodd} and~\ref{brclassev} and no other direct way of proving it is known. 
\end{Remark}

Our aim is to give a new proof of Theorems~\ref{brclassodd} and~\ref{brclassev} in \S ~\ref{newpf}, which seems to be better suited as a starting point for an analogous classification of elliptic operators defined by $G$-generalized gradients for other subgroups $G$ of $\SO(n)$. For this reason we only mention here for the proof of these theorems, that the arguments used by Branson involve tools and techniques of harmonic analysis, explicit computations of the spectra of generalized gradients on the sphere and a strong irreducibility property of principal series representations of the group $\Spin_0(n+1,1)$. For details we refer the reader to Branson, \cite{br1} (see also \cite{br}, \cite{br99}). 

\begin{Remark}
The $\GL(n)$- and $\mathrm{O}(n)$-generalized gradients which are elliptic have been studied by Kalina, \O rsted, Pierzchalski, Walczak and Zhang, \cite{kopwz}, in an elementary way using Young diagrams. However, they find only the top gradient and miss the other elliptic \mbox{generalized} gradients in the list of Branson, because they restrict to a certain special class, the so-called ``up gradients", as pointed out in \cite{br1}.
\end{Remark}

\section{A New Proof of the Classification}\label{newpf}

The aim of this section is to give a new proof of Branson's classification of minimal elliptic (sums of) generalized gradients, \cite{br1}, stated here in our notation in Theorems~\ref{brclassodd} and~\ref{brclassev}. The tools we use are, on the one hand, the relationship between elliptic operators and refined Kato inequalities and, on the other hand, the branching rules for the special orthogonal group. In a first step we extend to all (not necessarily elliptic) generalized gradients the computation of the Kato constant provided by Calderbank, Gauduchon and Herzlich, \cite{cgh}. The main idea is that the argument in \cite{cgh} may be, in a certain sense, reversed: while in \cite{cgh} the purpose is to establish for each natural elliptic ope\-rator an explicit formula of its optimal Kato constant, assuming known the list of Branson of minimal elliptic operators, our goal is to analyze to which extend the computations of the Kato constants rely on this assumption of ellipticity and how Branson's list could be recovered. Our proof does not cover a special case, which is explained in Remark~\ref{spcase}. However, being based mainly on representation theoretical arguments, this proof suggests that it should carry over to other subgroups $G$ of $\SO(n)$, in order to provide the classification of natural elliptic operators constructed from $G$-generalized gradients, as pointed out in Remark~\ref{othersubgps}.

The new proof of the classification result in Theorems~\ref{brclassodd} and~\ref{brclassev} will follow from Propositions~\ref{corcgh} and \ref{cormaxne} and Remark~\ref{listminell}.

\subsection{Elliptic Operators and Refined Kato Inequalities}\label{ellkato}

We first briefly recall what Kato inequalities are and show how refined Kato inequalities are related to the ellipticity of differential operators. 

Kato inequalities are estimates in Riemannian geometry, which have proven to be a powerful technique for linking vector-valued and scalar-valued problems in analysis on manifolds. The classical Kato inequality may be stated as follows. For any section $\varphi$ of a Riemannian or Hermitian vector bundle $E$ endowed with a metric connection $\nabla$ over a Riemannian manifold $(M,g)$, at any point where $\varphi$ does not vanish, the following inequality holds:
\begin{equation}\label{katoin}
|d|\varphi||\leq|\nabla \varphi|.
\end{equation}

This estimate is a direct consequence of the Schwarz inequality applied to the identity $d(|\varphi|^2)=2\ip{\nabla\varphi,\varphi}$, which is given by the fact that the connection is metric. Thus, equality is attained at a point $x\in M$ if and only if $\nabla \varphi$ is a multiple of $\varphi$ at $x$, \emph{i.e.} if there exists a $1$-form $\alpha$ such that
\begin{equation}\label{katoeq}
\nabla\varphi=\alpha\otimes\varphi.
\end{equation}
Whenever \eqref{katoeq} has no solutions in the corresponding geometric setting, there exist refined Kato inequalities, which are of the form 
\begin{equation}\label{refkatoin}
|d|\varphi||\leq k|\nabla \varphi|,
\end{equation}
with a constant $k<1$. For example, such estimates occur in Yau's proof of the Calabi conjecture or in the Bernstein problem for minimal hypersurfaces in $\mathbb{R}^n$. It turns out that the knowledge of the best constant $k$ plays a key role in such proofs. For a survey of these techniques see the introductory part in \cite{cgh} and the references therein.

The principle underlying the existence of refined Kato inequalities was first remarked by J.-P.~Bourguignon, \cite{jpb}. He pointed out that in all geometric settings where refined Kato inequalities occured, the sections under consideration are solutions of a natural linear first order injectively elliptic system and that in such a situation the equality case in \eqref{katoin} cannot be achieved, except at points where $\nabla\varphi=0$. To see this, suppose that equality is attained at a point by a solution $\varphi$ of such an elliptic system. At that point, $\nabla\varphi=\alpha\otimes\varphi$, for some $1$-form $\alpha$. A natural first order operator is one of the form $P_I$ given by \eqref{oppj}. Hence $0=\Pi_I(\nabla\varphi)=\Pi_I(\alpha\otimes \varphi)$ and, by the ellipticity of $P_I$, it follows that $\alpha\otimes \varphi=0$, so $\nabla\varphi=0$. It thus turns out that there is a strong relationship between the ellipticity of the operators $P_I$ and the existence of refined Kato inequalities for sections in their kernel (see Lemma~\ref{equivellkato}).

Calderbank, Gauduchon and Herzlich, \cite{cgh}, proved that there \mbox{exists} indeed a refined Kato inequality for sections in the kernel of any natural first order injectively elliptic operator $P_I$, which acts on sections of a vector bundle associated to an irreducible $\SO(n)$ or $\Spin(n)$-representation. They computed the optimal Kato constant $k_I$, which depends only on the choice of the elliptic operator, in terms of representation-theoretical data. More precisely, the formulas for the optimal Kato constants involve only the conformal weights of the generalized gradients, which are explicitly known. In the sequel we show how this computation can be extended to all (not necessarily elliptic) generalized gradients. In order give our argument we first need to briefly review the main steps in \cite{cgh} for the computation of the optimal Kato constant (see also \cite{cgh2}, \cite{herz}).

The general setting is the same as above and $P_I$ is the operator defined by \eqref{oppj} acting on sections of $V_\lambda M$. In \cite{cgh} it is proven that for each injectively elliptic operator $P_I$, there exists an optimal constant $k_I<1$ such that the refined Kato inequality holds:
\begin{equation}\label{katoinp}
|d|\varphi||\leq k_I|\nabla \varphi|,\quad \text{ for all } \varphi\in\ker(P_I),
\end{equation}
and an explicit formula is given for $k_I$, in terms of the translated conformal weights (see Theorem~\ref{thmcgh}). We recall that the conformal weights are the eigenvalues of the so-called conformal weight operator defined as follows:

\begin{Definition}\label{defconfop}
The {\it conformal weight operator} of an $\SO(n)$-representation \mbox{$\lambda: \SO(n) \to \Aut(V)$}, is the symmetric endomorphism defined as follows:
\begin{equation}\label{confop}
B: (\mathbb{R}^n)^*\otimes V \to (\mathbb{R}^n)^*\otimes V, \quad B(\alpha\otimes v)=\sum_{i=1}^{n}e_i^*\otimes d\lambda(e_i\wedge\alpha)v,
\end{equation}
where $\{e_i\}_{\overline{1,n}}$ is an orthonormal basis of $\mathbb{R}^n$ and $\{e_i^*\}_{\overline{1,n}}$ its dual basis. We also denote by $B$ the induced endomorphism on the associated bundle \mbox{$\mathrm{T}^*M\!\otimes \!V_{\lambda} M$}.
\end{Definition}

As pointed out in \cite{cgh}, the computations are simplified if one considers the \emph{translated conformal weight operator}:
\begin{equation}\label{translcw}
\widetilde{B}:(\mathbb{R}^n)^*\otimes V_\lambda\to (\mathbb{R}^n)^*\otimes V_\lambda,\quad \widetilde{B}:=B+\frac{n-1}{2}\mathrm{Id},
\end{equation}
whose eigenvalues are the \emph{translated conformal weights}, which explicitly known (see \emph{e.g.} \cite{feg}):
\begin{equation}\label{confwtrans}
\begin{split}
\widetilde{w}_0(\lambda) &=0,\\
\widetilde{w}_{i,+}(\lambda) &=\lambda_i-i+\frac{n+1}{2} , \text{ for } i=1,\dots, m,\\
\widetilde{w}_{i,-}(\lambda) &=-\lambda_i+i-\frac{n-1}{2}, \text{ for } i=1,\dots, m.
\end{split}
\end{equation}
The key property used in the sequel is that the (translated) conformal weights are strictly ordered, with the exception of the case when $n$ is even, $n=2m$, $\lambda_m=0$ and $\tilde{w}_{m,+}=\tilde{w}_{m,-}$, which is due to the fact that the two corresponding $\SO(n)$-irreducible representations are exchanged by a change of orientation, while their sum is an irreducible $\mathrm{O}(n)$-representation. In this exceptional case these two representations are considered as one summand, so that the conformal weights of distinct projections are always different from each other. The translated conformal weights have the advantage that the virtual weights whose relevance depends on the same condition on $\lambda$, \emph{i.e} that are in the same box in the Diagrams~\ref{figsro} and~\ref{figsre}, sum up to zero if that condition is not fulfilled. For instance, if $\lambda_i=\lambda_{i+1}$, then the weights $-\varepsilon_{i}$ and $\varepsilon_{i+1}$ are not relevant for $\lambda$ and their corresponding conformal weights satisfy: $\widetilde{w}_{i,-}(\lambda)+\widetilde{w}_{i+1,+}(\lambda)=0$. This cancellation property for non-relevant weights is useful for the forthcoming computations.

The strict ordering of the translated conformal weights allows us to rename them (and the corresponding summands in the decomposition of the tensor product $(\mathbb{R}^n)^*\otimes V_\lambda$) and to index them in a decreasing ordering as follows: 
\begin{equation}\label{descod}
(\mathbb{R}^n)^*\otimes V_\lambda=\overset{N}{\underset{i=1}{\oplus}} V_i,
\end{equation}
with
\[\widetilde{w}_1(\lambda)>\widetilde{w}_2(\lambda)>\cdots>\widetilde{w}_N(\lambda),\]
where $N$ is the number of summands in the decomposition, \emph{i.e.} the number of relevant weights for $\lambda$. This reordering of the indices of the conformal weights is then carried over to the corresponding weights of the standard representation and thus, the subsets $I$ defining the operators $P_I$ are subsets of $\{1,\dots,N\}$. 

\begin{Remark}\label{listminell}
Notice that, in the above notation, the weights which are in the same box in Diagram~\ref{figsro} and~\ref{figsre} are pairs of type $\{i,N+2-i\}$ and the list of minimal elliptic operators of the form $P_I$ established by Branson (see Theorems~\ref{brclassodd} and~\ref{brclassev}) is the following:
\begin{enumerate}
\item $P_{\{1\}}$;
\item $P_{\{\ell+1\}}$ if $N=2\ell$ and $\lambda_m\neq 0$;
\item $P_{\{\ell\}}$ if $N=2\ell-1$ and $\lambda$ is properly half-integral;
\item $P_{\{i,N+2-i\}}$ for $i=2,\dots,\ell-1$;
\item $P_{\{\ell,\ell+2\}}$ if $N=2\ell$;
\item $P_{\{\ell,\ell+1\}}$ if $N=2\ell-1$ and $\lambda$ is integral.
\end{enumerate}
In particular, the list of the minimal elliptic operators depends only on the ordering of the conformal weights.
\end{Remark}

The following result reduces the search for refined Kato inequalities to an algebraic problem. The complement of $I$ in $\{1,\dots,N\}$ is denoted by $\widehat{I}$.

\begin{Lemma}(\cite{cgh})\label{katoct}
Let $I$ be a subset of $\{1,\dots,N\}$ and $P_I:=\sum_{i\in I}\Pi_i\circ \nabla$ the corresponding operator. For any section $\varphi$ in the kernel of $P_I$ and at any point where $\varphi$ does not vanish, the following inequality holds:
\begin{equation}\label{katocteq}
|d|\varphi||\leq k_I|\nabla \varphi|,
\end{equation}
where the constant $k_I$, called \emph{Kato constant}, is defined by
\begin{equation}\label{katoct1}
k_I := \underset{|\alpha|=|v|=1} {\sup}|\Pi_{\widehat I}(\alpha\otimes v)|=\sqrt{1-\underset{|\alpha|=|v|=1} {\inf}|\Pi_{I}(\alpha\otimes v)|^2},
\end{equation}
where $\alpha\in(\mathbb{R}^n)^*$ and $v\in V_\lambda$. Furthermore, equality holds at a point if and only if $\nabla\varphi=\Pi_{\widehat I}(\alpha\otimes\varphi)$ for a $1$-form $\alpha$ at that point, such that: 
\[|\Pi_{\widehat{I}}(\alpha\otimes\varphi)|=k_I|\alpha\otimes\varphi|.\]
\end{Lemma}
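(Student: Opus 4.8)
The plan is to sharpen the proof of the classical Kato inequality \eqref{katoin} by feeding in the hypothesis $\varphi\in\ker(P_I)$, and then to extract the equality case by tracking exactly where the Cauchy--Schwarz estimates become equalities.

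Fix a point $x$ with $\varphi(x)\neq 0$, work with a local orthonormal basis $\{e_i\}$, and let $\gamma$ be the real $1$-form $\gamma(X)=\mathrm{Re}\,\langle\nabla_X\varphi,\varphi\rangle$. Metricity of $\nabla$ gives $d(|\varphi|^2)=2\gamma$, hence $d|\varphi|=\gamma/|\varphi|$ and $|d|\varphi||=|\gamma|/|\varphi|$. A direct computation in the basis $\{e_i\}$ yields
\[\mathrm{Re}\,\langle\nabla\varphi,\gamma\otimes\varphi\rangle=\sum_i\gamma(e_i)\,\mathrm{Re}\,\langle\nabla_{e_i}\varphi,\varphi\rangle=\sum_i\gamma(e_i)^2=|\gamma|^2,\qquad |\gamma\otimes\varphi|=|\gamma|\,|\varphi|.\]
The crucial observation is that, since $P_I\varphi=\Pi_I(\nabla\varphi)=0$ at $x$, the element $\nabla\varphi$ satisfies $\nabla\varphi=\Pi_{\widehat I}(\nabla\varphi)\in\im\Pi_{\widehat I}$; as $\Pi_{\widehat I}$ is an orthogonal (hence self-adjoint) projection, this gives $|\gamma|^2=\mathrm{Re}\,\langle\nabla\varphi,\gamma\otimes\varphi\rangle=\mathrm{Re}\,\langle\nabla\varphi,\Pi_{\widehat I}(\gamma\otimes\varphi)\rangle$. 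Combining the Schwarz inequality with the very definition of $k_I$ as a supremum over unit decomposable elements, which yields $|\Pi_{\widehat I}(\gamma\otimes\varphi)|\leq k_I|\gamma\otimes\varphi|=k_I|\gamma|\,|\varphi|$, we obtain
\[|\gamma|^2\leq|\nabla\varphi|\,|\Pi_{\widehat I}(\gamma\otimes\varphi)|\leq k_I\,|\nabla\varphi|\,|\gamma|\,|\varphi|;\]
dividing by $|\gamma|\,|\varphi|$ (the case $\gamma=0$ being trivial) and recalling $|d|\varphi||=|\gamma|/|\varphi|$ gives \eqref{katocteq}. The second expression for $k_I$ in \eqref{katoct1} follows at once from $|\Pi_I(\alpha\otimes v)|^2+|\Pi_{\widehat I}(\alpha\otimes v)|^2=|\alpha\otimes v|^2$, valid because $\Pi_I$ and $\Pi_{\widehat I}$ are complementary orthogonal projections on $(\mathbb{R}^n)^*\otimes V_\lambda$.

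For the equality statement, note that equality in \eqref{katocteq} forces equality in both Schwarz steps. The first equality means $\nabla\varphi$ is a nonnegative multiple of $\Pi_{\widehat I}(\gamma\otimes\varphi)$, so that taking $\alpha$ to be the corresponding multiple of $\gamma$ gives $\nabla\varphi=\Pi_{\widehat I}(\alpha\otimes\varphi)$; the second equality is exactly the condition that $\alpha\otimes\varphi$ realizes the supremum defining $k_I$, i.e. $|\Pi_{\widehat I}(\alpha\otimes\varphi)|=k_I|\alpha\otimes\varphi|$. Conversely, if $\nabla\varphi=\Pi_{\widehat I}(\alpha\otimes\varphi)$ with $|\Pi_{\widehat I}(\alpha\otimes\varphi)|=k_I|\alpha\otimes\varphi|$, then running the computation backwards produces equality in \eqref{katocteq}.

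I expect the only point requiring care to be the bookkeeping in the equality case: one has to check that the $1$-form obtained from the first Schwarz equality is genuinely proportional to $\gamma$, and to dispose separately of the degenerate situations $\gamma=0$ and $k_I=0$ --- the latter occurring only when $\widehat I=\emptyset$, where $P_I=\nabla^*\nabla$, $\ker(P_I)$ consists of parallel sections and both sides of \eqref{katocteq} vanish. Everything else is a routine manipulation with metric connections and orthogonal projections; notably, no representation theory is needed here --- it enters only later, in the actual evaluation of the constant $k_I$.
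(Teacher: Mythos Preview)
Your proof is correct and follows essentially the same approach as the paper: the paper only sketches the argument, stating that it rests on the refined Schwarz inequality $|\langle\Phi,v\rangle|/|v|\leq k|\Phi|$ for $\Phi\in(\mathbb{R}^n)^*\otimes V_\lambda$, lifted to the bundle with $\Phi=\nabla\varphi$ and $v=\varphi$, and your write-up is precisely the detailed execution of this sketch, using the self-adjointness of $\Pi_{\widehat I}$ to pass the projection onto the decomposable factor $\gamma\otimes\varphi$ before applying Cauchy--Schwarz.
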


The proof of Lemma~\ref{katoct} is based, as for the classical Kato inequality, on purely algebraic refined Schwarz inequalities of the form:
\begin{equation}\label{schwarz}
\frac{|\langle\Phi, v\rangle|}{|v|}\leq k|\Phi|,
\end{equation}
where $\Phi\in (\mathbb{R}^n)^*\otimes V_\lambda$ and $v\in V_\lambda$. The inequality \eqref{katocteq} is obtained by lifting \eqref{schwarz} to the associated vector bundles and putting $v=\varphi$ and $\Phi=\nabla\varphi$ for a section $\varphi\in\Gamma(V_\lambda M)$. 

The first step in the minimization process for the computation of the Kato constant $k_I$, given by \eqref{katoct1}, is to express the norm of each projection $\Pi_j$, $j=1,\dots, N$, as an affine function as follows: for $N=2\ell-1$
\begin{equation}\label{projodd}
|\Pi_j(\alpha\otimes v)|^2= \frac{\displaystyle \widetilde{w}_j^{2(\ell-1)}- \sum_{k=2}^{\ell} (-1)^k\widetilde{w}_j^{2(\ell-k)}Q_k}
{\displaystyle\prod_{k\neq j}(\widetilde{w}_j- \widetilde{w}_k)}=:\pi_j(Q_2,\dots,Q_\ell),
\end{equation}
and for $N=2\ell$
\begin{equation}\label{projeven}
|\Pi_j(\alpha\otimes v)|^2= \left(\widetilde{w}_j-\frac{1}{2}\right)\frac{\displaystyle \widetilde{w}_j^{2(\ell-1)}- \sum_{k=2}^{\ell} (-1)^k\widetilde{w}_j^{2(\ell-k)}Q_k}
{\displaystyle\prod_{k\neq j}(\widetilde{w}_j- \widetilde{w}_k)}=:\pi_j(Q_2,\dots,Q_\ell),
\end{equation}
with the variables $Q_k$ given by: $Q_k:=(-1)^{k-1}\ip{ A_{2k-2}(\alpha\otimes v),\alpha\otimes v}$,  $k=2,\dots,\ell$, for $N=2\ell-1$ or $N=2\ell$, 
where $A_k:=\sum_{\ell=0}^k(-1)^\ell\sigma_\ell(\widetilde{w})\widetilde{B}^{k-\ell}$ and $\sigma_i(\widetilde{w})$ is the $i$-th elementary symme\-tric function in the translated conformal weights $\widetilde{w}_1,\dots, \widetilde{w}_N$.

Hence, the problem of estimating $\underset{|\alpha|=|v|=1} {\inf}|\Pi_{I}(\alpha\otimes v)|^2$ (for a subset $I$ corres\-ponding to an elliptic operator) is reduced to minimizing this affine function over the admissible region in the $(\ell-1)$-dimensional affine space. The admissible region consists of the points $Q$ of coordinates $\{Q_k\}_{k=\overline{2,\ell}}$, such that there exist unitary vectors $\alpha\in(\mathbb{R}^n)^*$ and $v\in V_\lambda$ with the property that for each $k=2,\dots, \ell$ the following relation holds: \mbox{$Q_k=(-1)^{k-1}\ip{ A_{2k-2}(\alpha\otimes v),\alpha\otimes v}$}. Thus, the search for Kato constants mainly reduces to linear programming.

The admissible region is contained in a convex in the $Q$-space, since $|\Pi_j(\alpha\otimes v)|^2=\pi_j(Q)$ and each norm is non-negative and smaller than $1$, if $Q$ is an admissible point. More precisely, from \eqref{projodd} it follows that the point $Q=(Q_2,\ldots ,Q_\ell)$ is in the convex region $\mathcal{P}$ in $\R^{\ell-1}$ defined by the following system of linear inequalities for $N$ odd, $N=2\ell-1$:
\begin{equation}\label{oddsys}
\sum_{k=2}^{\ell}(-1)^{j+k} \widetilde{w}_j^{2(\ell-k)}Q_k
\geq(-1)^j \widetilde{w}_j^{2(l-1)}, \quad j=1,\dots,2\ell-1,
\end{equation}
with equality if and only if $|\Pi_j(\alpha\otimes v)|^2=\pi_j(Q)=0$. For $N$ even, $N=2\ell$, the system of linear inequalities is similarly obtained from \eqref{projeven}, taking into account that the sign of the denominator in \eqref{projeven} is $(-1)^{j-1}$:
\begin{equation}\label{evensys}
\begin{cases}
\overset{\ell}{\underset{k=2}{\sum}}(-1)^{j+k} \widetilde{w}_j^{2(\ell-k)}Q_k \geq(-1)^j \widetilde{w}_j^{2(l-1)}, \quad 1\leq j\leq\ell,\\
\overset{\ell}{\underset{k=2}{\sum}}(-1)^{j+k} \widetilde{w}_j^{2(\ell-k)}Q_k \leq(-1)^j \widetilde{w}_j^{2(l-1)}, \quad \ell+1\leq j\leq 2\ell,
\end{cases}
\end{equation}
and equality is attained if and only if $|\Pi_j(\alpha\otimes v)|^2=\pi_j(Q)=0$.

The convex region $\mathcal{P}$ defined by the system \eqref{oddsys}, respectively \eqref{evensys}, is proven in \cite{cgh} to be compact, hence polyhedral. Since the norms are affine in the $Q_k$'s, it then suffices to minimize over the set of vertices.

For a subset $J\subset\{1,\dots,N\}$ with $\ell-1$ elements, the intersection of the corresponding hyperplanes is the point denoted by $Q^J$:
\begin{equation}\label{qj}
\{Q^J\}:=\underset{j\in J}{\cap} \{\pi_j (Q_2,\ldots Q_\ell)=0\},
\end{equation}
whose coordinates are given by the elementary symmetric functions in the squares of the translated conformal weights: $Q^J_k = \sigma_{k-1}\bigl((\widetilde{w}_j^2)_{j\in J}\bigr)$. At the point $Q^J$, the affine functions $\pi_j$, defined by \eqref{projodd} for $N=2\ell-1$, take the values
\begin{equation}\label{oddsoln}
\pi_j(Q^J)=
\frac{\displaystyle\prod_{k\in J}( \widetilde{w}_j^2-\widetilde{w}_k^2)}
{\displaystyle\prod_{k\neq j}( \widetilde{w}_j- \widetilde{w}_k)}
=\frac{\displaystyle\prod_{k\in J, k\neq j}( \widetilde{w}_j+ \widetilde{w}_k)}
{\displaystyle\prod_{k\in \widehat J, k\neq j}( \widetilde{w}_j- \widetilde{w}_k)}
\,\varepsilon_j(J),
\end{equation}
where $\varepsilon_j(J)=0$ if $j\in J$ and $1$ otherwise. Similarly, for $N=2\ell$, the affine functions $\pi_j$ defined by \eqref{projeven} take the values:
\begin{equation}\label{evensoln}
\pi_j(Q^J)=
\left(\widetilde{w}_j-\frac{1}{2}\right)\frac{\displaystyle\prod_{k\in J}( \widetilde{w}_j^2-\widetilde{w}_k^2)}
{\displaystyle\prod_{k\neq j}( \widetilde{w}_j- \widetilde{w}_k)}
=\left(\widetilde{w}_j-\frac{1}{2}\right)\frac{\displaystyle\prod_{k\in J, k\neq j}( \widetilde{w}_j+ \widetilde{w}_k)}
{\displaystyle\prod_{k\in \widehat J, k\neq j}( \widetilde{w}_j- \widetilde{w}_k)}
\,\varepsilon_j(J).
\end{equation}

As there exists a set of minimal elliptic operators, there also exists a set of \emph{maximal non-elliptic} operators, \emph{i.e.} the set of operators $P_I$ which are non-elliptic and $I$ has maximal cardinality. Theorems~\ref{brclassodd} and ~\ref{brclassev} provide us also the set of maximal non-elliptic operators, which are explicitly described as follows.

Let $\NE$ denote the set of subsets of $\{1,\dots, N\}$ whose elements are obtained by choosing exactly one index in each of the sets $\{j,N+2-j\}$ for $2\leq j \leq \ell$, if $N=2\ell-1$ or $N=2\ell$, giving $2^{\ell-1}$ elements:
\begin{equation}\label{neset}
\NE=\{J\subset\{1,\dots,N\}\,|\, |J\cap\{i,N+2-i\}|=1, \text{for } 2\leq i\leq \ell\}.
\end{equation}
The elements of $\NE$ are then precisely the subsets of $\{1,\dots,N\}$ corresponding to the maximal non-elliptic operators, unless $n$ is odd, $N=2\ell-1$ and $\lambda$ is properly half-integral, in which case the subsets containing $\ell$ (which corresponds to the zero weight) are elliptic. This is called the \emph{exceptional case} and is the only one when the Kato constant provided by Theorem~\ref{thmcgh} might not be optimal.

The set $\NE$ can easily be described in the graphical interpretation given by Diagrams~\ref{figsro} and ~\ref{figsre} (with the remark that now the indices are considered according to the convention given by the decreasing ordering of the conformal weights): each element of $\NE$ contains exactly one index from each box containing two weights. For instance, for $n=2m+1$, if $\lambda_m=\frac{1}{2}$, then $-\varepsilon_m$ is not relevant and the zero weight forms one box, so that it is not taken in any subset in $\NE$; if $\lambda_m\geq 1$, then $\{-\varepsilon_m,0\}$ are in the same box and one of them is chosen for each subset in $\NE$. For $n=2m$, if $\lambda_{m-1}>\lambda_m>0$, then $\{-\varepsilon_{m-1},\varepsilon_m\}$ form one box and $\{-\varepsilon_m\}$ is alone in a box; whereas if \mbox{$\lambda_{m-1}>-\lambda_m>0$}, then $\pm\varepsilon_m$ are interchanged (since the ordering of the corresponding conformal weights changes: $w_{m,+}-w_{m,-}=2\lambda_m$), namely $\{-\varepsilon_{m-1},-\varepsilon_m\}$ are in one box and $\{\varepsilon_m\}$ forms itself a box. This is in accordance with the classification of minimal elliptic operators: $\{-\varepsilon_m\}$ and $\{-\varepsilon_{m-1},\varepsilon_m\}$ are elliptic if $\lambda_m>0$, and $\{\varepsilon_m\}$ and $\{-\varepsilon_{m-1},-\varepsilon_m\}$ are elliptic if $\lambda_m<0$. In the special case when the weights $\pm\varepsilon_m$ are relevant and their conformal weights are equal: $w_{m,-}=w_{m,+}$, \emph{i.e.} when $\lambda_{m-1}>\lambda_m=0$, then, in our convention, the corresponding representations are considered as one summand $V_{\lambda-\varepsilon_m}\oplus V_{\lambda+\varepsilon_m}$ and in this case the last box in the Diagram~\ref{figsre} is formed by $\{-\varepsilon_{m-1}, \varepsilon_m\}$, and the corresponding projection to $\varepsilon_m$ is here actually the projection onto $V_{\lambda-\varepsilon_m}\oplus V_{\lambda+\varepsilon_m}$. 

In the sequel we call $\NE$ the set of maximal non-elliptic operators, which is true apart from the exceptional case. Notice that each subset in $\NE$ has exactly $\ell-1$ elements, where $\ell$ gives the parity of $N$, \emph{i.e.} $N=2\ell-1$ or $N=2\ell$. The explicit description of the vertices of the polyhedral region $\mathcal{P}$ in $\R^{\ell-1}$ is the following:

\begin{Proposition}[\cite{cgh}]\label{vertices}
The vertices of the polyhedron $\mathcal{P}$ are exactly the points $Q^J$, defined by \eqref{qj}, with $J\in\NE$, the set of maximal non-elliptic operators. In the exceptional case, when $n$ is odd and $\lambda$ is properly half-integral, only one inclusion holds, namely that the vertices are contained in the set $\NE$.
\end{Proposition}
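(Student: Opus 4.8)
The plan is to show that the vertices of $\mathcal{P}$ are precisely the points $Q^J$ with $J\in\NE$, by a two-step argument: first, that every vertex of $\mathcal{P}$ is of the form $Q^J$ for some $J\in\NE$, and second (outside the exceptional case), that each such $Q^J$ is in fact an admissible point, hence a vertex. For the first inclusion, recall that $\mathcal{P}\subset\R^{\ell-1}$ is cut out by the $N$ halfspaces \eqref{oddsys} (or \eqref{evensys}), and has been shown in \cite{cgh} to be compact, hence a polytope. A vertex is the intersection of at least $\ell-1$ of the bounding hyperplanes $\{\pi_j(Q)=0\}$ in general position; a generic such intersection is the point $Q^J$ of \eqref{qj} for a subset $J$ with $|J|=\ell-1$. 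The task is to show that if $Q^J$ actually lies in $\mathcal{P}$ (i.e.\ satisfies all the remaining inequalities), then $J$ must belong to $\NE$, that is, $J$ contains exactly one index from each pair-box $\{i,N+2-i\}$, $2\le i\le\ell$, and never the index $1$ (nor, for $N=2\ell$, the index $\ell+1$).

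The key computational input is the explicit evaluation \eqref{oddsoln} (resp.\ \eqref{evensoln}) of $\pi_j$ at $Q^J$: for $j\notin J$,
\[
\pi_j(Q^J)=\frac{\prod_{k\in J,\,k\neq j}(\widetilde w_j+\widetilde w_k)}{\prod_{k\in\widehat J,\,k\neq j}(\widetilde w_j-\widetilde w_k)}\,\varepsilon_j(J),
\]
multiplied by the extra factor $(\widetilde w_j-\tfrac12)$ when $N=2\ell$. The membership $Q^J\in\mathcal{P}$ is equivalent to $\pi_j(Q^J)\ge 0$ for all $j$ (each $\pi_j$ is a norm-square on the admissible set, and the sign conventions of the inequalities \eqref{oddsys}--\eqref{evensys} are exactly $\pi_j\ge 0$). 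So I would analyze the sign of the displayed product. The denominator has a definite sign determined by how many $\widetilde w_k$ with $k\in\widehat J$ exceed $\widetilde w_j$; the numerator sign is governed by how many sums $\widetilde w_j+\widetilde w_k$ are negative, and here the crucial structural fact enters: by the cancellation property of the translated conformal weights emphasized in \S~\ref{ellkato}, the weights in a common pair-box satisfy $\widetilde w_i+\widetilde w_{N+2-i}=0$, while in fact $\widetilde w_1 > 0$ and more generally the weights are strictly decreasingly ordered with $\widetilde w_j + \widetilde w_{N+2-j}=0$. If $J$ omitted both members of some pair-box $\{i,N+2-i\}$, or if $J$ contained index $1$, one can exhibit a value of $j$ for which $\pi_j(Q^J)<0$, contradicting $Q^J\in\mathcal{P}$; conversely if $J\in\NE$ the signs work out so that $\pi_j(Q^J)\ge 0$ for all $j$. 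This sign bookkeeping, carried out uniformly for the odd and even cases (the even case carrying the harmless extra factor $\widetilde w_j-\tfrac12>0$ since $\widetilde w_j>\tfrac12$ for the relevant weights), is the technical heart of the argument.

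It remains to see that for $J\in\NE$ the point $Q^J$ is not merely in $\mathcal{P}$ but genuinely a vertex, i.e.\ that it is attained by some unit decomposable element $\alpha\otimes v$ — equivalently, that $J$ corresponds to a genuine (maximal) non-elliptic operator $P_J$, so that there exists $0\neq \alpha\otimes v$ with $\Pi_j(\alpha\otimes v)=0$ for all $j\in J$. This is exactly the content of Branson's classification as restated in Remark~\ref{listminell}: the operators $P_I$ with $I\in\NE$ are precisely the maximal non-elliptic ones, except when $n$ is odd, $N=2\ell-1$ and $\lambda$ is properly half-integral, in which case the subsets of $\NE$ containing $\ell$ are in fact elliptic — so those particular $Q^J$ need not be admissible, and one only gets the inclusion ``vertices $\subset\{Q^J : J\in\NE\}$''. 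Thus in the exceptional case I would claim only that inclusion, invoking the first step; in all other cases the equality follows by combining the first step with the admissibility of $Q^J$ for $J\in\NE$ guaranteed by Theorems~\ref{brclassodd} and \ref{brclassev}. I expect the main obstacle to be precisely the sign analysis of the products in \eqref{oddsoln}--\eqref{evensoln}: one must show that \emph{exactly} the sets $J\in\NE$ yield $\pi_j(Q^J)\ge 0$ for every $j$, and this requires carefully tracking, for each candidate $j\notin J$, the parity of the number of negative factors in numerator and denominator in terms of the position of $j$ relative to the box structure — a bookkeeping that is elementary but delicate, and where the even-dimensional case and the orientation-exchanged weights $\pm\varepsilon_m$ need separate care.
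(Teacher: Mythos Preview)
Your overall two-step strategy is the right one and matches the paper (and \cite{cgh}): the inclusion $\mathcal V\subset\{Q^J:J\in\NE\}$ is a sign analysis of \eqref{oddsoln}--\eqref{evensoln}, while the reverse inclusion (outside the exceptional case) comes from Branson's classification, which guarantees that each $P_J$ with $J\in\NE$ is genuinely non-elliptic, hence $Q^J$ is an admissible point. Your handling of the second inclusion is fine.

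However, the specific facts you invoke for the sign analysis are wrong and would derail the argument. First, the identity $\widetilde w_i+\widetilde w_{N+2-i}=0$ is \emph{false} for relevant weights: the cancellation property in \S\ref{ellkato} holds precisely when the box condition fails, i.e.\ when the pair is \emph{not} relevant; when it is relevant one has $\widetilde w_i+\widetilde w_{N+2-i}<0$ (e.g.\ for the pair $\{-\varepsilon_j,\varepsilon_{j+1}\}$ the sum equals $\lambda_{j+1}-\lambda_j$). Second, the factor $\widetilde w_j-\tfrac12$ in the even case is \emph{not} uniformly positive: one has $\widetilde w_j>\tfrac12$ for $j\le\ell$ and $\widetilde w_j<\tfrac12$ for $j\ge\ell+1$, and this sign change is essential, not ``harmless''.

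The correct mechanism, as carried out in the paper (Lemma~\ref{lemvert}), uses instead the ordering of the \emph{squares} of the translated conformal weights: the key point is that $\widetilde w_s^2$ and $\widetilde w_{N+2-s}^2$ are adjacent in the interlaced ordering $\widetilde w_1^2>\widetilde w_N^2>\widetilde w_2^2>\widetilde w_{N-1}^2>\cdots$, so that for any $J$ with $|J|=\ell-1$ omitting both $s$ and $N+2-s$ the products $\prod_{j\in J}(\widetilde w_s^2-\widetilde w_j^2)$ and $\prod_{j\in J}(\widetilde w_{N+2-s}^2-\widetilde w_j^2)$ have the same sign. Since $\sgn\pi_i(Q^J)=(-1)^{i-1}\sgn\prod_{j\in J}(\widetilde w_i^2-\widetilde w_j^2)$ and $s,\,N+2-s$ have opposite parity (for $N$ odd), one of $\pi_s(Q^J),\pi_{N+2-s}(Q^J)$ is negative. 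For $N$ even the parity of $i$ no longer changes across the pair, but the sign flip of $\widetilde w_i-\tfrac12$ does the job. Note also that your case ``$1\in J$'' is subsumed by pigeonhole: any $(\ell-1)$-element $J\notin\NE$ necessarily omits both members of some pair.
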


In order to compute the Kato constant given by \eqref{katoct1} it suffices to minimize or maximize over the set of vertices of the polyhedron $\mathcal{P}$. The identification of these vertices provided by Proposition~\ref{vertices} and the explicit computation of the norms $|\Pi_{j}(\alpha\otimes v)|^2$ at each vertex prove the following:

\begin{Theorem}[Calderbank, Gauduchon and Herzlich, \cite{cgh}]\label{thmcgh}
Let $I$ be a subset of $\{1,\dots,N\}$ corresponding to an injectively elliptic operator $P_I=\sum_{i\in I}\Pi_i\circ \nabla$ acting on sections of $V_\lambda M$. Then a refined Kato inequality holds: \mbox{$|d|\varphi||\leq k_I|\nabla \varphi|$}, for any section $\varphi\in\ker(P_I)$, outside the zero \mbox{set of $\varphi$}.\\
If $N$ is odd, the Kato constant $k_I$ is given by the following expressions:
\begin{equation}\label{Nodd} k_I^2 = \max_{J\in\NE}
\left(\,\sum_{i\in\widehat I\intersect\widehat J} \frac{\prod_{j\in J} ( \widetilde{w}_i
+  \widetilde{w}_j)}{\prod_{j\in\widehat J\setminus\{i\}} ( \widetilde{w}_i -
 \widetilde{w}_j)} \right)
= 1 - \min_{J\in\NE} \left(\,\sum_{i\in I\intersect\widehat J} \frac{\prod_{j\in
J} ( \widetilde{w}_i + \widetilde{w}_j)}{\prod_{j\in \widehat J\setminus\{i\}}
( \widetilde{w}_i - \widetilde{w}_j)} \right).
\end{equation}
If $N$ is even, the Kato constant $k_I$ is similarly given by:
\begin{equation}\label{Neven} 
\begin{split}
k_I^2 &= \max_{J\in\NE}
\left(\,\sum_{i\in\widehat I\intersect\widehat J} \left(\widetilde{w}_i-\frac{1}{2}\right) \frac{\prod_{j\in J} ( \widetilde{w}_i
+  \widetilde{w}_j)}{\prod_{j\in\widehat J\setminus\{i\}} ( \widetilde{w}_i -
 \widetilde{w}_j)} \right)\\
&= 1 - \min_{J\in\NE} \left(\,\sum_{i\in I\intersect\widehat J}\left(\widetilde{w}_i-\frac{1}{2}\right) \frac{\prod_{j\in
J} ( \widetilde{w}_i + \widetilde{w}_j)}{\prod_{j\in \widehat J\setminus\{i\}}
( \widetilde{w}_i - \widetilde{w}_j)} \right).
\end{split}
\end{equation}
These Kato constants are optimal, unless in the exceptional case when $n$ and $N$ are odd, $N=2\ell+1$, $\lambda$ is properly half-integral and the set $J$ achieving the extremum contains $\ell+1$.
\end{Theorem}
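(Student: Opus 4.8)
The plan is to assemble the pieces already in place rather than to compute anything genuinely new. By Lemma~\ref{katoct} the existence of a refined Kato inequality for $\varphi\in\ker(P_I)$ is already reduced to the purely algebraic quantity $k_I=\sqrt{1-\underset{|\alpha|=|v|=1}{\inf}|\Pi_I(\alpha\otimes v)|^2}$ of \eqref{katoct1}, so two things remain: (a) to evaluate this infimum in closed form, and (b) to decide when the value obtained is actually attained by some unit decomposable tensor, which is exactly what separates an optimal constant from a merely valid one.

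For (a) I would first use that $|\Pi_I(\alpha\otimes v)|^2=\sum_{i\in I}|\Pi_i(\alpha\otimes v)|^2$, since the $\Pi_i$ project onto mutually orthogonal, pairwise inequivalent irreducible summands of $(\mathbb{R}^n)^*\otimes V_\lambda$. By \eqref{projodd} when $N=2\ell-1$, and by \eqref{projeven} when $N=2\ell$, each $|\Pi_i(\alpha\otimes v)|^2$ is the affine function $\pi_i(Q_2,\dots,Q_\ell)$ of the $Q$-coordinates; these identities are Lagrange interpolations of the characteristic polynomial of $\widetilde{B}$ at its pairwise distinct eigenvalues $\widetilde{w}_1>\cdots>\widetilde{w}_N$, which is precisely where the strict ordering of the conformal weights enters. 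Hence $\sum_{i\in I}\pi_i$ is affine in $Q$, and as $\alpha\otimes v$ ranges over unit decomposable elements, $Q$ ranges over a subset of the compact polyhedron $\mathcal{P}$ cut out by \eqref{oddsys}, resp. \eqref{evensys}. An affine function on a compact polytope attains its supremum and infimum at a vertex, and by Proposition~\ref{vertices} the vertices of $\mathcal{P}$ are exactly the points $Q^J$ of \eqref{qj} with $J\in\NE$. Substituting $Q^J$ into \eqref{oddsoln}, resp. \eqref{evensoln}, and using $\pi_i(Q^J)=0$ for $i\in J$, the sum $\sum_{i\in\widehat I}\pi_i(Q^J)$ collapses to a sum over $\widehat I\cap\widehat J$ whose explicit value is precisely the bracketed expression in \eqref{Nodd}, resp. \eqref{Neven}; the same computation with $I$ in place of $\widehat I$ gives the $1-\min$ form, the two being equivalent because $\sum_{i=1}^{N}\pi_i(Q)=|\alpha\otimes v|^2=1$ identically, so $\sum_{i\in\widehat I}\pi_i=1-\sum_{i\in I}\pi_i$ at every $Q^J$.

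Part (b) is the delicate point and, within this theorem, the main obstacle. The computation above only yields $k_I^2\le\max_{J\in\NE}(\cdots)$, i.e. it already establishes the refined Kato inequality \eqref{katocteq} with this constant; optimality requires in addition that the maximizing vertex $Q^J$ be genuinely realized by a unit decomposable tensor, equivalently that the closure of the admissible region fill out all of $\mathcal{P}$. The two-sided statement of Proposition~\ref{vertices} supplies exactly this, except in the single case it singles out — $n$ and $N$ odd with $\lambda$ properly half-integral — where only the inclusion of the vertex set of $\mathcal{P}$ into $\{Q^J:J\in\NE\}$ is available. There, if the extremum in \eqref{Nodd} happens to be attained at a $J$ containing the index of the zero weight, the corresponding $Q^J$ need not be admissible, so the formula then delivers only a valid upper bound for $k_I$; this is precisely the exceptional case of the statement, and outside it the maximizing $Q^J$ is an honest vertex and the constant is optimal. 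The genuine technical weight of the argument therefore sits upstream — in the affine reduction \eqref{projodd}/\eqref{projeven}, the compactness of $\mathcal{P}$, and the vertex identification of Proposition~\ref{vertices}, all taken from \cite{cgh} — and the only subtlety left for this proof is controlling which vertices of $\mathcal{P}$ are attained by decomposable tensors and isolating the degeneration forced by the zero weight in the properly half-integral case.
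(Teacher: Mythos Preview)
Your proposal is correct and follows essentially the same route the paper sketches (the detailed argument being deferred to \cite{cgh}): reduce via Lemma~\ref{katoct} to extremizing the affine sum $\sum_{i\in I}\pi_i$ over the compact polytope $\mathcal{P}$, identify the vertices through Proposition~\ref{vertices}, and read off the closed formulas from \eqref{oddsoln}/\eqref{evensoln}. The one point worth sharpening is that optimality is not secured by Proposition~\ref{vertices} alone --- that proposition only identifies the vertices of $\mathcal{P}$, whereas admissibility of the extremal $Q^J$ comes from the separate fact that each $J\in\NE$ (outside the exceptional case) indexes a genuinely non-elliptic $P_J$, forcing the existence of a decomposable $\alpha\otimes v$ with $\pi_j=0$ for all $j\in J$; this is precisely where \cite{cgh} uses Branson's classification as input.
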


\begin{Remark}
A completely different approach to the computation of optimal Kato constants was provided, independently, by Branson, \cite{br2}, whose proof relies on powerful techniques of harmonic analysis. One may say that the method in \cite{cgh} is the local method, relying on algebraic considerations on the conformal weights and a linear programming problem. On the other hand, the method in \cite{br2} is a global one, using the spectral computation on the round sphere in \cite{br1} and a result relating the spectrum of an operator to information on its symbol. The advantage of the local method is that it provides an explicit description of the sections satisfying the equality case of the refined Kato inequality, while the advantage of the global method is that it is sharp also in the exceptional case.
\end{Remark}

The starting point in our new proof is the following straightforward observation:

\begin{Lemma}\label{equivellkato}
Let $k_I$ be the optimal Kato constant for the operator $P_I$, which is given by $k_I=\underset{|\alpha|=|v|=1} {\sup}|\Pi_{\widehat I}(\alpha\otimes v)|$ (see Lemma~\ref{katoct}). Then the following equivalence holds:
\[k_I<1\; \Longleftrightarrow\; P_I \text{ is an elliptic operator.}\]
\end{Lemma}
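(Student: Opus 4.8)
The plan is to deduce the equivalence directly from the two forms of the Kato constant recorded in Lemma~\ref{katoct}, together with an elementary compactness argument, using no input about $\so(n)$ beyond what has already been set up. First I would recall that, by the second equality in \eqref{katoct1},
\[
k_I^2 \;=\; 1-\underset{|\alpha|=|v|=1}{\inf}\,|\Pi_{I}(\alpha\otimes v)|^2,
\]
where $\alpha$ runs over the unit sphere of $(\mathbb{R}^n)^*$ and $v$ over the unit sphere of $V_\lambda$; this reformulation rests only on the fact that $\Pi_I$ and $\Pi_{\widehat I}$ are complementary orthogonal projections of $(\mathbb{R}^n)^*\otimes V_\lambda$, so that $|\Pi_I(\alpha\otimes v)|^2+|\Pi_{\widehat I}(\alpha\otimes v)|^2=|\alpha\otimes v|^2=|\alpha|^2|v|^2$. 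Consequently $k_I<1$ is equivalent to the strict positivity of $\inf_{|\alpha|=|v|=1}|\Pi_{I}(\alpha\otimes v)|^2$, and it remains only to see that this infimum is positive exactly when $P_I$ is injectively elliptic.

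For the implication from ellipticity of $P_I$ to $k_I<1$, I would use the reformulation of ellipticity recalled after Lemma~\ref{equivell}: $P_I$ is elliptic precisely when $\Pi_I$ vanishes on no nonzero decomposable element, so in particular $|\Pi_I(\alpha\otimes v)|^2>0$ for every pair of unit vectors $(\alpha,v)$. The function $(\alpha,v)\mapsto|\Pi_I(\alpha\otimes v)|^2$ is continuous and is defined on the product of two unit spheres in finite-dimensional spaces, which is compact; a continuous, everywhere-positive function on a compact set attains a positive minimum, so the infimum above is strictly positive, i.e. $k_I<1$.

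For the converse I would argue by contraposition. If $P_I$ is not elliptic, then, again by the characterization after Lemma~\ref{equivell}, there is a nonzero decomposable element $\alpha_0\otimes v_0$ with $\alpha_0\neq 0$ and $v_0\neq 0$ lying in the kernel of $\Pi_I$; rescaling $\alpha_0$ and $v_0$ to unit length yields a point at which $|\Pi_I(\alpha\otimes v)|^2=0$, whence the infimum vanishes and $k_I=1$.

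I do not expect any genuine obstacle here; this lemma is really the elementary bridge between the analytic ellipticity of $P_I$ and the algebraic quantity $k_I$, and the excerpt itself advertises it as a straightforward observation. The only points deserving a little care are that the relevant notion is injective ellipticity --- injectivity of the symbol on real cotangent vectors $\alpha\in(\mathbb{R}^n)^*$, which is exactly the range over which $\alpha$ varies in the definition of $k_I$ --- and the use of finite-dimensionality to guarantee that the supremum and infimum defining $k_I$ are attained. The genuinely substantial work, namely computing $k_I$ explicitly (via the conformal weights and the linear-programming description of the polyhedron $\mathcal{P}$) and thereby reading off for which subsets $I$ one has $k_I<1$, is carried out in the subsequent parts of \S\ref{newpf}.
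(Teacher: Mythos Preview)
Your proof is correct and follows essentially the same route as the paper's own argument: both use the Pythagorean identity $|\Pi_I(\alpha\otimes v)|^2+|\Pi_{\widehat I}(\alpha\otimes v)|^2=1$ for unit vectors to reduce the equivalence to its contrapositive form, and then read off non-ellipticity as the existence of a unit decomposable element annihilated by $\Pi_I$. The only difference is that you make the compactness argument (ensuring the supremum is attained, so that $k_I=1$ genuinely yields a witnessing pair $(\alpha,v)$) explicit, whereas the paper leaves it implicit; this is a welcome clarification rather than a different approach.
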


\begin{proof}
If $|\alpha|=|v|=1$, then \mbox{$1=|\alpha\otimes\varphi|^2=|\Pi_I(\alpha\otimes\varphi)|^2+|\Pi_{\widehat{I}}(\alpha\otimes\varphi)|^2$}, so that $k_I$ is always smaller or equal to $1$. Then, by negation, the equivalence in the statement is the same as the following equivalence:
\[k_I=1\; \Longleftrightarrow\; P_I \text{ is not elliptic},\]
which is a consequence of the definitions: $k_I=1$ if and only if there exist $\alpha$ and $v$ of norm $1$ such that $|\Pi_{\widehat{I}}(\alpha\otimes v)|=1$, which is then the same as $|\Pi_I(\alpha\otimes\varphi)|=0$, or, equivalently, $\alpha\otimes\varphi\in\ker(P_I)$, meaning that $P_I$ is not elliptic.\hfill $\qed$
\end{proof}

Lemma~\ref{equivellkato} implies that the ellipticity of a natural first order differential operator $P_I$ follows from the computation of its optimal Kato constant $k_I$. Thus, as soon as we are able to compute explicitly $k_I$ (without using the ellipticity assumption) or to show that $k_I$ is strictly less than $1$, it follows that the operator $P_I$ is elliptic. In the sequel we show that $k_I$ is strictly bounded from above by $1$ for the operators in Branson's list (\emph{i.e.} in the notation given by the decreasing ordering of the translated conformal weights, for all operators enumerated in Remark~\ref{listminell}), except the case 3., which corresponds to the zero weight.

We use the same notation as above and notice that for the construction of the convex region $\mathcal{P}$, as well as for establishing its compactness, the only ingredient needed is the ordering of the translated conformal weights, which is provided by the explicit formulas \eqref{confwtrans}.
 
The key observation is that the only step in the proof of Theorem~\ref{thmcgh} in \cite{cgh} where the ellipticity of the operators is used, is in the identification of the vertices of the polyhedral region, namely in Proposition~\ref{vertices}. If we now consider the same set $\NE$ introduced in \eqref{neset}, then one inclusion established in \mbox{Proposition~\ref{vertices}} still holds, without any ellipticity assumption on the operators. More precisely, we obtain:
\begin{Lemma}\label{lemvert}
The vertices of the polyhedron $\mathcal{P}$ are given by a subset of $\NE$.
\end{Lemma}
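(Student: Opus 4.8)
The plan is to show that every vertex of $\mathcal{P}$ is of the form $Q^J$ for some $J\in\NE$. The region $\mathcal{P}$ sits in $\R^{\ell-1}$ and is cut out by the $N=2\ell-1$ (resp. $N=2\ell$) linear inequalities \eqref{oddsys} (resp. \eqref{evensys}), one for each index $j\in\{1,\dots,N\}$; by definition a vertex is a point where $\ell-1$ of these inequalities are active and which is an extreme point of $\mathcal{P}$. Activating the inequality indexed by $j$ means exactly that $\pi_j(Q)=|\Pi_j(\alpha\otimes v)|^2=0$, so a vertex $Q^J$ corresponds to a subset $J\subset\{1,\dots,N\}$ with $|J|=\ell-1$ such that $\{Q^J\}=\bigcap_{j\in J}\{\pi_j=0\}$, with coordinates $Q^J_k=\sigma_{k-1}\bigl((\widetilde w_j^2)_{j\in J}\bigr)$ as in \eqref{qj}. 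The whole content of the lemma is therefore the combinatorial statement: if $J$ has $\ell-1$ elements and $Q^J$ lies in $\mathcal{P}$, then $J\in\NE$, i.e.\ $|J\cap\{i,N+2-i\}|=1$ for each $i=2,\dots,\ell$.

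The key tool is the evaluation formula \eqref{oddsoln} (resp. \eqref{evensoln}): for $j\notin J$,
\[
\pi_j(Q^J)=\frac{\prod_{k\in J,\,k\neq j}(\widetilde w_j+\widetilde w_k)}{\prod_{k\in\widehat J,\,k\neq j}(\widetilde w_j-\widetilde w_k)}
\]
up to the harmless positive-or-sign factor $(\widetilde w_j-\tfrac12)$ appearing when $N$ is even. Membership $Q^J\in\mathcal{P}$ forces $\pi_j(Q^J)\geq 0$ for all $j$ (this is just the requirement $|\Pi_j(\alpha\otimes v)|^2\geq 0$ rephrased via \eqref{oddsys}/\eqref{evensys}); so I would analyze the sign of the above product using the strict ordering $\widetilde w_1>\widetilde w_2>\cdots>\widetilde w_N$. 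First, I would record the pairing structure of the conformal weights: from the explicit formulas \eqref{confwtrans} one has the ``box'' relation $\widetilde w_i+\widetilde w_{N+2-i}=0$ for $2\le i\le\ell$ (this is the cancellation property already noted before \eqref{descod}), together with $\widetilde w_1>0$ and, when $N=2\ell$, the middle weight satisfies $\widetilde w_\ell-\widetilde w_{\ell+1}$ related to $\lambda_m$. Thus the only pairs $(j,k)$ with $\widetilde w_j+\widetilde w_k=0$ are the boxes $\{i,N+2-i\}$. Now suppose $J$ contained \emph{both} elements of some box $\{i,N+2-i\}$, or suppose $J$ \emph{omitted} both; using $|J|=\ell-1$ and a pigeonhole count over the $\ell-1$ boxes (plus the singleton $\{1\}$ which must lie in $\widehat J$, since $\widetilde w_1+\widetilde w_k>0$ for all $k\neq 1$ would otherwise make $\pi_1(Q^J)>0$ unattainable as the lone active-sign constraint), I would derive a sign contradiction in some $\pi_j(Q^J)$: if a box is entirely inside $J$, pick $j\notin J$ paired with nothing problematic and count how many negative factors $\widetilde w_j+\widetilde w_k$ appear in the numerator versus how many negative factors $\widetilde w_j-\widetilde w_k$ appear in the denominator, and show the two counts have opposite parity, forcing $\pi_j(Q^J)<0$. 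Symmetrically, if a box is entirely outside $J$ then $J$ must (by the count) contain both elements of some other box, reducing to the previous case.

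The main obstacle will be the bookkeeping of these sign parities, especially in the even case $N=2\ell$ where the extra factor $\widetilde w_j-\tfrac12$ enters and where the middle box $\{\ell,\ell+1\}$ behaves differently depending on the sign of $\lambda_m$ (and degenerates when $\lambda_m=0$, in which case $\widetilde w_\ell=\widetilde w_{\ell+1}$ and the two summands are amalgamated, so $N$ is effectively reduced — this edge case must be handled separately, consistently with the convention fixed after \eqref{confwtrans}). Once the sign analysis is organized box-by-box it goes through; the cleanest formulation is probably: for $J$ with $|J|=\ell-1$ and $Q^J\in\mathcal{P}$, looking at any $j$ paired with an element of $J$ forces that element's partner to lie in $\widehat J$, and iterating this gives exactly one element of $J$ per box. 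This is strictly weaker than Proposition~\ref{vertices} — we only get that \emph{vertices} $\subset\{Q^J:J\in\NE\}$, not equality — but that one inclusion is all that is needed downstream, since for the operators in Branson's list (Remark~\ref{listminell}) it suffices to \emph{bound} $k_I$ from above by minimizing the affine functions $|\Pi_I(\alpha\otimes v)|^2$ over a superset of the true extreme points.
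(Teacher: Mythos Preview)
Your overall contrapositive strategy is correct and matches the paper's: show that if $|J|=\ell-1$ and $J\notin\NE$, then some $\pi_j(Q^J)<0$, hence $Q^J\notin\mathcal{P}$. However, the central algebraic fact you invoke is wrong, and your sign bookkeeping collapses with it.

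You claim the ``box relation'' $\widetilde w_i+\widetilde w_{N+2-i}=0$ for $2\le i\le\ell$, citing the cancellation property mentioned before \eqref{descod}. Reread that passage: the paper states that the two translated conformal weights in a box sum to zero \emph{only when the condition labeling that box fails}, i.e.\ precisely when those weights are \emph{not} relevant and hence do not appear in the decomposition at all. For relevant boxes one computes directly from \eqref{confwtrans} that $\widetilde w_{k,-}+\widetilde w_{k+1,+}=\lambda_{k+1}-\lambda_k<0$, never zero. Consequently the claim ``the only pairs $(j,k)$ with $\widetilde w_j+\widetilde w_k=0$ are the boxes'' is false, and the parity count of negative factors $\widetilde w_j+\widetilde w_k$ in the numerator cannot be organized the way you suggest.

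The paper's fix is to work with the \emph{first} form of \eqref{oddsoln}--\eqref{evensoln}, namely $\pi_j(Q^J)=\prod_{k\in J}(\widetilde w_j^2-\widetilde w_k^2)\big/\prod_{k\neq j}(\widetilde w_j-\widetilde w_k)$, so that the sign is governed by the ordering of the \emph{squares} $\widetilde w_j^2$. The crucial (and true) observation replacing your sum-to-zero claim is that $\widetilde w_s^2$ and $\widetilde w_{N+2-s}^2$ are \emph{adjacent} in the total ordering of the squares. Given any $J\notin\NE$ with $\ell-1$ elements, a pigeonhole argument produces a box $\{s,N+2-s\}$ entirely in $\widehat J$; adjacency of their squares forces $\prod_{k\in J}(\widetilde w_s^2-\widetilde w_k^2)$ and $\prod_{k\in J}(\widetilde w_{N+2-s}^2-\widetilde w_k^2)$ to have the same sign, while the denominators contribute $(-1)^{s-1}$ and $(-1)^{N+1-s}$, which differ by a sign (coming from the parity of $s$ versus $N+2-s$ when $N$ is odd, or from the factor $\widetilde w_j-\tfrac12$ when $N$ is even). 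Hence $\pi_s(Q^J)$ and $\pi_{N+2-s}(Q^J)$ have opposite signs, and one is negative. Your sketch can be repaired by replacing the false sum-to-zero relation with this adjacency-of-squares observation; the rest of your outline (including the remark that only the inclusion $\mathcal V\subset\{Q^J:J\in\NE\}$ is needed downstream) is fine.
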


\begin{proof}
Let us denote by $\mathcal{V}$ the set of vertices of the polyhedron $\mathcal{P}$ in $\R^{\ell-1}$, which are characterized as follows:
\[\mathcal{V}=\{Q^J\,|\, |J|=\ell-1, \Pi_j(Q^J)=0, \text{for all } j\in J;\, \Pi_j(Q^J)>0, \text{for all } j\in \widehat{J}\}.\]
Then we have to show the following inclusion: $\mathcal{V}\subset\{Q^J|\, J\in\NE\}$. Or, equivalently, we prove that $J\notin\NE$ implies $Q^J
\notin\mathcal{V}$ (where $J$ is a subset of $\{1, \dots, N\}$ with $\ell-1$ elements, for $N=2\ell$ or $N=2\ell-1$).

Let $J\notin\NE$. In order to show that $Q^J$ is not a vertex of the polyhedron $\mathcal{P}$ it is
enough to find an element $i\in\{1, \dots, N\}$ such that $\pi_i (Q^J)<0$.

For $N$ odd, equation \eqref{oddsoln} implies that for each $i\notin J$, $\Pi_i(Q^J)$ is nonzero and its sign is:
\[\sgn(\pi_i(Q^J))=(-1)^{i-1}\sgn(\prod_{j\in J}( \widetilde{w}_i^2 - \widetilde{w}_j^2 )).\]
There are exactly $\ell-1$ couples of the type $(s,N+2-s)$ and, since $J\notin\NE$ and has $\ell-1$ elements, there exists at least one such couple not contained in $J$. \\
The ordering of the squares of the translated conformal weights, that can be directly checked by the formulas \eqref{confwtrans}, is the following ($N=2\ell-1$): 
\[\widetilde{w}_1^2>\widetilde{w}_{N+1}^2>\widetilde{w}_2^2>\widetilde{w}_{N}^2>\cdots>\widetilde{w}_i^2>\widetilde{w}_{N+2-i}^2>\cdots> \widetilde{w}_{\ell}^2>\widetilde{w}_{N+2-\ell}^2.\]
It then follows that for a couple $(s,N+2-s)$, $\widetilde{w}_{s}^2$ and $\widetilde{w}_{N+2-s}^2$ are adjacent in this ordering, so that the following signs are the same: 
\[\sgn(\prod_{j\in J}( \widetilde{w}_s^2 - \widetilde{w}_j^2 ))=\sgn(\prod_{j\in J}( \widetilde{w}_{N+2-s}^2 - \widetilde{w}_j^2 )).\] 
Since $N$ is odd, $s$ and $N+2-s$ have different parity, showing that $\pi_s (Q^J)$ and $\pi_{N+2-s} (Q^J)$  have opposite signs.

For $N$ even, the only difference is the way the sign chances when passing from $i=s$ to $i=N+2-s$: the parity of $i$ remains the same, but the sign of the factor $\left(\widetilde{w}_j-\frac{1}{2}\right)$ in \eqref{evensoln} changes.\hfill $\qed$
\end{proof}

From the inclusion $\mathcal{V}\subset\NE$ given by Lemma~\ref{lemvert}, the formula \eqref{katoct1} for the Kato constant $k_I$ and the expressions \eqref{projodd} and \eqref{projeven} for the norms of the projections, we obtain the following upper bound:

\begin{Proposition}\label{propuppb}
Let $I$ be a subset of $\{1,\dots,N\}$ and the operator $P_I=\sum_{i\in I}\Pi_i\circ \nabla$ acting on sections of $V_\lambda M$. Then the corresponding Kato constant $k_I$ satisfies the following inequality:
\begin{equation}\label{boundkato}
k^2_I = \underset{Q\in\mathcal{P}} {\max}\left(\sum_{j\in\widehat{I}}\pi_j(Q)\right)= \underset{Q\in\mathcal{V}} {\max}\left(\sum_{j\in\widehat{I}}\pi_j(Q)\right)\leq  \underset{J\in\mathcal{\NE}} {\max}\left(\sum_{j\in\widehat{I}}\pi_j(Q^J)\right)=:c_I.
\end{equation}
Thus, if $c_I<1$ for a subset $I\subset\{1,\dots,N\}$, it follows by Lemma~\ref{equivellkato} that the corresponding operator $P_I$ is elliptic.
\end{Proposition}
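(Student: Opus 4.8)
The proof of Proposition~\ref{propuppb} should follow by assembling the pieces already established in this section. The plan is to trace through the reduction of the Kato constant to a linear programming problem and then insert Lemma~\ref{lemvert} at the crucial step. First I would recall that by formula \eqref{katoct1} the Kato constant satisfies $k_I^2 = \sup_{|\alpha|=|v|=1}|\Pi_{\widehat I}(\alpha\otimes v)|^2 = \sup_{|\alpha|=|v|=1}\sum_{j\in\widehat I}|\Pi_j(\alpha\otimes v)|^2$, using that the decomposition \eqref{descod} is orthogonal. The expressions \eqref{projodd} (for $N=2\ell-1$) and \eqref{projeven} (for $N=2\ell$) rewrite each $|\Pi_j(\alpha\otimes v)|^2$ as the value $\pi_j(Q)$ of an affine function of the coordinates $Q=(Q_2,\dots,Q_\ell)$, where $Q_k := (-1)^{k-1}\ip{A_{2k-2}(\alpha\otimes v),\alpha\otimes v}$. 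Hence $k_I^2 = \sup_{|\alpha|=|v|=1}\sum_{j\in\widehat I}\pi_j(Q(\alpha,v))$, where the supremum is taken over the admissible region, i.e. the set of points $Q$ arising from unit vectors $\alpha, v$. Since the admissible region is contained in the compact convex polyhedron $\mathcal{P}$ defined by \eqref{oddsys} or \eqref{evensys}, and since $\sum_{j\in\widehat I}\pi_j$ is an affine function, its supremum over the admissible region is at most its maximum over $\mathcal{P}$; and as established in \cite{cgh}, the supremum is actually \emph{attained} and equals the maximum over $\mathcal{P}$, which in turn is attained at a vertex of $\mathcal{P}$. This gives the first two equalities in \eqref{boundkato}: $k_I^2 = \max_{Q\in\mathcal{P}}(\sum_{j\in\widehat I}\pi_j(Q)) = \max_{Q\in\mathcal{V}}(\sum_{j\in\widehat I}\pi_j(Q))$, where $\mathcal{V}$ is the vertex set.

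The second step is to insert Lemma~\ref{lemvert}, which asserts $\mathcal{V}\subset\{Q^J\mid J\in\NE\}$ without any ellipticity hypothesis. Therefore the maximum over $\mathcal{V}$ is bounded above by the maximum over the (possibly larger) finite set $\{Q^J\mid J\in\NE\}$, yielding
\[
k_I^2 = \max_{Q\in\mathcal{V}}\Bigl(\sum_{j\in\widehat I}\pi_j(Q)\Bigr) \leq \max_{J\in\NE}\Bigl(\sum_{j\in\widehat I}\pi_j(Q^J)\Bigr) =: c_I,
\]
which is precisely \eqref{boundkato}. The final assertion is then immediate: if $c_I<1$ then $k_I<1$, so by Lemma~\ref{equivellkato} the operator $P_I$ is elliptic.

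I do not expect a genuine obstacle here, since all the hard work is in the earlier lemmas; the only point that needs a little care is to justify the first equality $k_I^2 = \max_{Q\in\mathcal{P}}(\sum_{j\in\widehat I}\pi_j(Q))$ rather than merely the inequality $\leq$. This is where one must invoke the result of \cite{cgh} that the admissible region, although possibly a proper subset of $\mathcal{P}$, has the same image under each affine function $\pi_j$ on the relevant faces — more precisely, that the compactness of $\mathcal{P}$ together with the fact that each vertex $Q^J$ with $J\in\NE$ (hence, by Lemma~\ref{lemvert}, every actual vertex) is realized by genuine unit vectors $\alpha, v$ forces the supremum over the admissible set to coincide with the maximum over $\mathcal{P}$. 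Since we only need the upper bound $k_I^2\leq c_I$ to conclude ellipticity via Proposition~\ref{propuppb}, even if one is cautious about the middle equalities, the chain
\[
k_I^2 = \sup_{|\alpha|=|v|=1}\sum_{j\in\widehat I}\pi_j(Q(\alpha,v)) \leq \max_{Q\in\mathcal{P}}\sum_{j\in\widehat I}\pi_j(Q) = \max_{Q\in\mathcal{V}}\sum_{j\in\widehat I}\pi_j(Q) \leq c_I
\]
suffices for the application. I would state the equalities as in \eqref{boundkato} following \cite{cgh}, and emphasize that the new input relative to \cite{cgh} is exactly the one-sided inclusion $\mathcal{V}\subset\NE$ of Lemma~\ref{lemvert}, which replaces the ellipticity-dependent identification of vertices in Proposition~\ref{vertices}.
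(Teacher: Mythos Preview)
Your proposal is correct and matches the paper's own reasoning: the proposition is stated as an immediate consequence of the inclusion $\mathcal{V}\subset\{Q^J\mid J\in\NE\}$ from Lemma~\ref{lemvert}, combined with formula~\eqref{katoct1} and the affine expressions~\eqref{projodd}--\eqref{projeven} for the projection norms. Your additional care about whether the first equality in~\eqref{boundkato} is genuinely an equality (rather than just $\leq$) is a fair observation, and as you note, only the inequality $k_I^2\leq c_I$ is needed for the ellipticity conclusion.
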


We notice that the formulas for the optimal Kato constant in Theorem~\ref{thmcgh} actually compute the values of the upper bound $c_I$, if we do not assume the ellipticity of any operator involved. This straightforward, but important remark provides the main argument in our proof of Branson's classification.

From Theorem~\ref{thmcgh} applied to the special case when the set $I$ has only one element or two elements of the form ${\{i,N+2-i\}}$, one recovers the list of minimal elliptic operators as follows.

\begin{Proposition}\label{corcgh}
The upper bound $c_I$ is strictly smaller than $1$ for any of the following subsets $I$: 
\begin{enumerate}
\item $I=\{1\}$;
\item $I=\{\ell+1\}$ if $N=2\ell$ and $\lambda_m\neq 0$;
\item $I=\{i,N+2-i\}$ for $i=2,\dots,\ell$.
\end{enumerate}
From the above discussion it follows that the corresponding operators $P_I$ are elliptic. 
\end{Proposition}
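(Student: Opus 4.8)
The plan is to read Proposition~\ref{corcgh} off the explicit formulas \eqref{oddsoln}--\eqref{evensoln} for the numbers $\pi_j(Q^J)$, $J\in\NE$, by a sign computation that uses only the translated conformal weights \eqref{confwtrans} and the combinatorics of the set $\NE$ from \eqref{neset}; no ellipticity is invoked anywhere, which is legitimate since, as observed right after Proposition~\ref{propuppb}, the right-hand sides of the Kato-constant formulas are exactly the upper bound $c_I$ of \eqref{boundkato} once the ellipticity hypothesis is dropped. First I would reduce the statement to a positivity claim. The affine functions $\pi_1,\dots,\pi_N$ sum identically to $1$---for admissible $Q$ because $\sum_j|\Pi_j(\alpha\otimes v)|^2=|\alpha\otimes v|^2$, hence on all of $\R^{\ell-1}$ by affineness (equivalently this is the Lagrange-interpolation identity applied to the numerators of \eqref{projodd}--\eqref{projeven}, whose leading coefficient is $1$)---so \eqref{boundkato} becomes $c_I=1-\min_{J\in\NE}\sum_{j\in I}\pi_j(Q^J)$, whence $c_I<1$ is equivalent to $\sum_{j\in I}\pi_j(Q^J)>0$ for every $J\in\NE$. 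The indices $1$ and $\ell+1$ lie in no pair $\{i,N+2-i\}$ with $2\le i\le\ell$, hence in no $J\in\NE$; and for $I=\{i,N+2-i\}$ each $J\in\NE$ contains exactly one of the two indices, while $\pi_j(Q^J)=0$ for $j\in J$ by the definition \eqref{qj} of $Q^J$. Thus in all three cases it suffices to prove $\pi_{j^*}(Q^J)>0$ for every $J\in\NE$, where $j^*\notin J$ is $1$, $\ell+1$, or the member of $\{i,N+2-i\}$ not chosen by $J$, respectively.

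The heart of the argument is then the sign of $\pi_{j^*}(Q^J)$. For $j^*=1$ it is immediate: $\widetilde{w}_1=\widetilde{w}_{1,+}$ is the largest conformal weight, so in \eqref{oddsoln}/\eqref{evensoln} every denominator factor $\widetilde{w}_1-\widetilde{w}_k$ is positive, every $\widetilde{w}_1+\widetilde{w}_k$ is positive (in fact $\ge1$, by \eqref{confwtrans} and $|\lambda_i|\le\lambda_1$), and $\widetilde{w}_1-\tfrac12=\lambda_1+\tfrac{n-2}{2}>0$. In general, from $\widetilde{w}_1>\cdots>\widetilde{w}_N$ one has $\sgn\prod_{k\ne j^*}(\widetilde{w}_{j^*}-\widetilde{w}_k)=(-1)^{j^*-1}$, so \eqref{oddsoln} (resp. \eqref{evensoln}) gives $\sgn\pi_{j^*}(Q^J)=(-1)^{j^*-1+a}$ (resp. $=\sgn(\widetilde{w}_{j^*}-\tfrac12)\,(-1)^{j^*-1+a}$), where $a$ is the number of $k\in J$ with $\widetilde{w}_k^2>\widetilde{w}_{j^*}^2$. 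To control $a$ I would establish two elementary consequences of \eqref{confwtrans}: (i) the squares are ordered $\widetilde{w}_1^2>\widetilde{w}_N^2>\widetilde{w}_2^2>\widetilde{w}_{N-1}^2>\cdots$, so that the two members of each pair $\{s,N+2-s\}$ are adjacent in this order, with $\widetilde{w}_{N+2-s}^2>\widetilde{w}_s^2$ (and $\widetilde{w}_\ell^2$, resp. $\widetilde{w}_{\ell+1}^2$, smallest of all, according as $N$ is odd or even); and (ii) a conformal weight of ``small'' index $j\le\ell$ is $>\tfrac12$ and one of ``large'' index $j\ge\ell+1$ is $<\tfrac12$, the single degenerate value $\tfrac12$ (which occurs for $n$ even with $\lambda_m=0$) sitting at the central index $\ell$ of a list of odd length and therefore never multiplied by a $(\widetilde{w}-\tfrac12)$-factor. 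Since $J$ meets each pair $\{s,N+2-s\}$ in exactly one index, (i) forces each such pair disjoint from $\{j^*\}$ to contribute two factors of equal sign---hence nothing---to $(-1)^{j^*-1+a}$; accounting for the few remaining contributions (the top index $1$, which is never in $J$, and the pair containing $j^*$ when $j^*$ lies in one) gives $a=j^*-1$ if $j^*\le\ell$ and $a=i-2$ if $j^*=N+2-i$. Substituting, $\sgn\pi_{j^*}(Q^J)=(-1)^{2(j^*-1)}=+1$ in the first case; in the second it is $(-1)^{N-1}=+1$ for $N$ odd and $\sgn(\widetilde{w}_{j^*}-\tfrac12)\,(-1)^{N-1}=(-1)(-1)=+1$ for $N$ even, using (ii) and that a large index $j^*$ has $\widetilde{w}_{j^*}<\tfrac12$. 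For $j^*=\ell+1$ with $N=2\ell$, $\widetilde{w}_{\ell+1}^2$ is the smallest square, so $a=|J|=\ell-1$ and $\sgn\pi_{\ell+1}(Q^J)=\sgn(\widetilde{w}_{\ell+1}-\tfrac12)\,(-1)^{2\ell-1}=+1$ again by (ii). In every case $\pi_{j^*}(Q^J)>0$, which proves the Proposition.

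The main obstacle is the sign bookkeeping above and making it uniform: it must be run through the three positions of $j^*$, through $N$ even versus odd, and through the several shapes the list of relevant weights can take (integral versus properly half-integral $\lambda$, presence or absence of the zero weight, the coincidence $\widetilde{w}_{m,+}=\widetilde{w}_{m,-}$ for $n$ even with $\lambda_m=0$), always tracking the extra factor $\sgn(\widetilde{w}_{j^*}-\tfrac12)$ in the even case. The conceptual reason all parities cancel is exactly that $\NE$ is the family of ``one index per pair $\{i,N+2-i\}$'' subsets, so at a vertex $Q^J$ only the pairs incident to $j^*$ and the extremal index $1$ can produce a net sign. The exceptional case of Remark~\ref{spcase} should cause no difficulty here: for each of the three families of subsets $I$ in the statement, the reduction above replaces $\sum_{j\in I}\pi_j(Q^J)$ by a single term $\pi_{j^*}(Q^J)$ with $j^*\notin J$, and the sign computation---which never uses ellipticity---applies verbatim, so the computed bound $c_I$ is genuinely $<1$ even when $\lambda$ is properly half-integral.
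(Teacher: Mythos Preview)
Your proposal is correct and follows essentially the same route as the paper: both reduce $c_I<1$ to the positivity of $\sum_{j\in I}\pi_j(Q^J)$ for every $J\in\NE$ via the identities \eqref{Nodd2}--\eqref{Neven2}, and then establish positivity by a sign analysis of the product formulas \eqref{oddsoln}--\eqref{evensoln} using only the ordering of the translated conformal weights. The difference is organizational: the paper treats the three cases separately, inspecting each factor directly and, in case~3, actually identifying the minimizing $J$ to obtain a closed formula for $c_I$; you instead give a uniform parity argument, computing the number $a$ of sign changes in the numerator from the interlacing of the squares $\widetilde w_j^2$ and concluding $\pi_{j^*}(Q^J)>0$ in one stroke. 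Your version is more streamlined for the purpose of the proposition (which only asks for $c_I<1$), at the cost of not recording the explicit value of $c_I$ that the paper's case~3 computation yields.
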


\begin{proof}
By Theorem~\ref{thmcgh}, the upper bound $c_I$ is given by the following formula, if $N=2\ell-1$:
\begin{equation}\label{Nodd2} 
\begin{split}
c_I &= \underset{J\in\mathcal{\NE}} {\max}\left(\sum_{j\in\widehat{I}}\pi_j(Q^J)\right)= \max_{J\in\NE}
\left(\,\sum_{i\in\widehat I\intersect\widehat J} \frac{\prod_{j\in J} ( \widetilde{w}_i
+  \widetilde{w}_j)}{\prod_{j\in\widehat J\setminus\{i\}} ( \widetilde{w}_i -
 \widetilde{w}_j)} \right)\\
&= 1 - \min_{J\in\NE} \left(\,\sum_{i\in I\intersect\widehat J} \frac{\prod_{j\in
J} ( \widetilde{w}_i + \widetilde{w}_j)}{\prod_{j\in \widehat J\setminus\{i\}}
( \widetilde{w}_i - \widetilde{w}_j)} \right),
\end{split}
\end{equation}
and if $N=2\ell$:
\begin{equation}\label{Neven2} 
\begin{split}
c_I &= \underset{J\in\mathcal{\NE}} {\max}\left(\sum_{j\in\widehat{I}}\pi_j(Q^J)\right)=\max_{J\in\NE}
\left(\,\sum_{i\in\widehat I\intersect\widehat J} \left(\widetilde{w}_i-\frac{1}{2}\right) \frac{\prod_{j\in J} ( \widetilde{w}_i
+  \widetilde{w}_j)}{\prod_{j\in\widehat J\setminus\{i\}} ( \widetilde{w}_i -
 \widetilde{w}_j)} \right)\\
&= 1 - \min_{J\in\NE} \left(\,\sum_{i\in I\intersect\widehat J}\left(\widetilde{w}_i-\frac{1}{2}\right) \frac{\prod_{j\in
J} ( \widetilde{w}_i + \widetilde{w}_j)}{\prod_{j\in \widehat J\setminus\{i\}}
( \widetilde{w}_i - \widetilde{w}_j)} \right).
\end{split}
\end{equation}

The last expressions in \eqref{Nodd2} and \eqref{Neven2} are particularly simple if the set $I$ has just a few elements, as it is in our case. 

1. Substituting $I=\{1\}$ in \eqref{Nodd2} and \eqref{Neven2}, the sums reduce to one element, since $I\cap\widehat{J}=\{1\}$ for any $J\in\NE$, and we get:
\begin{equation}\label{I1odd} 
c_{\{1\}}= 1 - \min_{J\in\NE} \left(\frac{\prod_{j\in
J} ( \widetilde{w}_1 + \widetilde{w}_j)}{\prod_{j\in \widehat J\setminus\{i\}}
( \widetilde{w}_1 - \widetilde{w}_j)} \right), \quad \text{if } N=2\ell-1,
\end{equation}
\begin{equation}\label{I1even} 
c_{\{1\}} = 1 - \min_{J\in\NE} \left(\left(\widetilde{w}_1-\frac{1}{2}\right)\frac{\prod_{j\in
J} ( \widetilde{w}_1 + \widetilde{w}_j)}{\prod_{j\in \widehat J\setminus\{1\}}
( \widetilde{w}_1 - \widetilde{w}_j)} \right), \quad \text{if } N=2\ell,
\end{equation}
which implies that $c_{\{1\}}<1$, because $\widetilde{w}_1$ is the biggest translated conformal weight: $\widetilde{w}^2_1>\widetilde{w}^2_j$, for any $2\leq j\leq N$ and $\widetilde{w}_1=\lambda_1+\frac{n-1}{2}>\frac{1}{2}$ (we assume always $n\geq 2$ and $\lambda_1\neq 0$, otherwise $\lambda$ is just the trivial representation).

2. If the dimension $n$ is odd, $n=2m+1$, the case $N=2\ell$ can only occur if $\lambda_m=\frac{1}{2}$, as can be easily seen in the Diagram~\ref{figsro} which illustrates the selection rule (since in all the other cases the weights come in pairs). In this case, the index $\ell+1$, given by the decreasing ordering of the translated conformal weights, stays for the weight $0$. If $n=2m$ and $N=2\ell$, then from Diagram~\ref{figsre}, it follows that the index $\ell+1$ stays either for the weight $-\varepsilon_m$, if $\lambda_m>0$, or for the weight  $\varepsilon_m$, if $\lambda_m<0$ (since again the indices are given by the decreasing ordering of the translated conformal weights and $\widetilde{w}_{m,+}-\widetilde{w}_{m,-}=2\lambda_m$). Substituting $I=\{\ell+1\}$ in \eqref{Neven2} reduces again the sum to one element and yields the following expression:
\begin{equation}\label{I2even} 
c_{\{\ell+1\}} = 1 - \min_{J\in\NE}\left(\frac{\widetilde{w}_{\ell+1}-\frac{1}{2}}{\widetilde{w}_{\ell+1}-\widetilde{w}_1}\cdot\frac{\prod_{j\in
J} ( \widetilde{w}_{\ell+1} + \widetilde{w}_j)}{\prod_{j\in \widehat J\setminus\{1,\ell+1\}}
( \widetilde{w}_{\ell+1} - \widetilde{w}_j)} \right).
\end{equation}
From the explicit values of the translated conformal weights given by \eqref{confwtrans}, namely: $\widetilde{w}_{m,-}=-\lambda_m+m-\frac{n-1}{2}$ and $\widetilde{w}_{m,+}=\lambda_m-m+\frac{n+1}{2}$, it follows that for $n=2m+1$, as well as for $n=2m$, the term $\left(\widetilde{w}_{\ell+1}-\frac{1}{2}\right)$ is strictly negative, and thus $\frac{\widetilde{w}_{\ell+1}-\frac{1}{2}}{\widetilde{w}_{\ell+1}-\widetilde{w}_1}$ is strictly positive. From the way the sets $J\in\NE$ are defined, by choosing exactly one element from each pair $\{i,2\ell+2-i\}$ for $2\leq i\leq \ell$, it follows that in the product in \eqref{I2even}, there occur only factors of one of the following two types: $\frac{\widetilde{w}_{\ell+1}+\widetilde{w}_i}{\widetilde{w}_{\ell+1}-\widetilde{w}_{2\ell+2-i}}$ or $\frac{\widetilde{w}_{\ell+1}+\widetilde{w}_{2\ell+2-i}}{\widetilde{w}_{\ell+1}-\widetilde{w}_i}$ for some $2\leq i\leq \ell$. From the ordering of the translated conformal weights it turns out that each such factor is strictly positive, showing thus that $c_{\{\ell+1\}}<1$.

3. The ordering of the translated conformal weights implies the following inequalities, for any $i\in\{1,\dots,N\}$, $j\in\{1,\dots,\ell\}$ and $j\neq i,N+2-i$:
\begin{equation*}
\begin{split}
&\frac{\widetilde{w}_i+\widetilde{w}_j}{\widetilde{w}_i-\widetilde{w}_{N+2-j}}>\frac{\widetilde{w}_i+\widetilde{w}_{N+2-j}}{\widetilde{w}_i-\widetilde{w}_{j}}>0,\quad {if }\; i<j \text{ or }\; N+2-j<i,\\
&\frac{\widetilde{w}_i+\widetilde{w}_{N+2-j}}{\widetilde{w}_i-\widetilde{w}_{j}}>\frac{\widetilde{w}_i+\widetilde{w}_j}{\widetilde{w}_i-\widetilde{w}_{N+2-j}}>0,\quad {if}\; j<i<N+2-j.
\end{split}
\end{equation*}

If $N=2\ell-1$, then substituting $I$ in \eqref{Nodd2} with a set formed by a pair of type $I=\{i,N+2-i\}$, with $i\in\{2,\dots,\ell\}$, and using the above relations yields the following expression for the upper bound of the Kato constant:
\begin{equation*}
c_I = 1 - \min \left(\frac{\widetilde{w}_i+ \widetilde{w}_{2\ell+1-i}}{ \widetilde{w}_i- \widetilde{w}_1}, \frac{ \widetilde{w}_i+ \widetilde{w}_{2\ell+1-i}}{ \widetilde{w}_{2\ell+1-i}- \widetilde{w}_1}\right).
\end{equation*}
Similarly, if $N=2\ell$, then substituting $I=\{i,N+2-i\}$ in \eqref{Neven2} yields:
\begin{equation*}
c_I = 1 - \min \left(\frac{(\widetilde{w}_i+ \widetilde{w}_{2\ell+2-i})(\widetilde{w}_i-\frac{1}{2})}{ (\widetilde{w}_i- \widetilde{w}_{\ell+1})(\widetilde{w}_i- \widetilde{w}_{1})}, \frac{ (\widetilde{w}_i+ \widetilde{w}_{2\ell+2-i})(\widetilde{w}_{2\ell+2-i}-\frac{1}{2})}{ (\widetilde{w}_{2\ell+2-i}- \widetilde{w}_{\ell+1})(\widetilde{w}_{2\ell+2-i}- \widetilde{w}_1)}\right).
\end{equation*}
The same argument as in the case 2. shows that $c_I<1$.\hfill $\qed$
\end{proof}

Proposition~\ref{corcgh} proves that all the operators that come up in Branson's classification (listed in Remark~\ref{listminell} in our notation) are elliptic, except for one special case explained in Remark~\ref{spcase}. However, our aim is to determine \emph{all} minimal elliptic operators, so that we still have to eliminate the other possibilities. Namely, on the one hand, we have to show that the generalized gradients corresponding to an element in one of the sets obtained in the case 3. of Proposition~\ref{corcgh} are not elliptic, and on the other hand, that there are no other combinations which provide elliptic operators. Thus, we have to find the maximal non-elliptic operators, in order to conclude that the elliptic operators found in Proposition~\ref{corcgh} are \emph{all} the minimal elliptic operators.

\subsection{Non-elliptic generalized gradients and branching rules}\label{nonellbr} 

The main tool we need here is the branching rule of the special orthogonal group and the following necessary condition for ellipticity (see also \cite{cgh}):

\begin{Lemma}\label{necell} 
Let $P_I:\Gamma(V_\lambda) \to \Gamma(\underset{i\in I}{\oplus} V_i)$ be the operator corresponding to a subset $I$ of $\{1,\dots,N\}$, in the notation introduced by \eqref{descod}. If there exists an irreducible $\SO(n-1)$-subrepresentation of $V_\lambda$ that does not occur as \mbox{$\SO(n-1)$}-subrepresentation of $V_i$ for any $i\in I$, then $P_I$ is not elliptic.
\end{Lemma}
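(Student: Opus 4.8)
The plan is to produce, by a Schur-lemma argument, a single nonzero decomposable element annihilated by the symbol of $P_I$. Recall (see Lemma~\ref{equivell} and the discussion around \eqref{prj}) that $P_I$ is elliptic precisely when the symbol map $\Pi_I\circ(\alpha\otimes\cdot):V_\lambda\to\bigoplus_{i\in I}V_i$ is injective for every nonzero $\alpha\in(\R^n)^*$, equivalently when $\Pi_I$ is injective on the cone of decomposable elements of $(\R^n)^*\otimes V_\lambda$. Hence it suffices to exhibit one nonzero $\alpha$ and one nonzero $v\in V_\lambda$ with $\Pi_I(\alpha\otimes v)=0$.

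First I would fix $\alpha=e_n^*$ and let $\SO(n-1)$ denote the stabilizer of $e_n$ inside $\SO(n)$, i.e.\ the standard subgroup with respect to which the branching rule $\SO(n)\downarrow\SO(n-1)$ is taken. Every element of this $\SO(n-1)$ fixes $e_n$ and is orthogonal, hence fixes $e_n^*$; therefore the map $\iota:V_\lambda\to(\R^n)^*\otimes V_\lambda$, $\iota(v)=e_n^*\otimes v$, is injective and $\SO(n-1)$-equivariant. Composing with the $\SO(n)$-equivariant (in particular $\SO(n-1)$-equivariant) projection $\Pi_I$ yields an $\SO(n-1)$-equivariant map $\Phi:=\Pi_I\circ\iota:V_\lambda\to\bigoplus_{i\in I}V_i$, and $\ker\Phi$ consists exactly of those $v$ for which $e_n^*\otimes v$ is decomposable and lies in $\ker\Pi_I$.

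Next comes the main step. Let $W\subset V_\lambda$ be an irreducible $\SO(n-1)$-subrepresentation which, by hypothesis, does not occur in any $V_i$ for $i\in I$. The restriction $\Phi|_W:W\to\bigoplus_{i\in I}V_i$ is $\SO(n-1)$-equivariant, so its image is on the one hand a quotient of the irreducible $W$, hence $0$ or isomorphic to $W$; on the other hand it is an $\SO(n-1)$-subrepresentation of $\bigoplus_{i\in I}V_i$, which by complete reducibility decomposes into irreducibles each occurring in some $V_i$ with $i\in I$. If the image were isomorphic to $W$, then $W$ would occur in some $V_i$, $i\in I$, contradicting the choice of $W$; hence $\Phi|_W=0$. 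Picking any nonzero $w\in W$ gives $e_n^*\otimes w\neq 0$ with $\Pi_I(e_n^*\otimes w)=0$, so $P_I$ is not elliptic. (If one insists on working over $\R$, note that a real-linear symbol map is injective if and only if its complexification is, so the computation above applies verbatim.)

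I expect no serious obstacle here: the proof is a direct application of equivariance together with Schur's lemma. The only points needing a little care are that the relevant $\SO(n-1)$ must be the stabilizer of $e_n$, so that $e_n^*$ is fixed and $\iota$ is genuinely equivariant, and the bookkeeping that an $\SO(n-1)$-irreducible appearing in the image of $\Phi|_W$ really does appear inside one of the $V_i$; both are routine. The substantive work — namely showing that this necessary condition, applied to the sets in $\NE$, actually forces non-ellipticity and thereby identifies \emph{all} maximal non-elliptic operators — is carried out in the results that follow, not in this lemma.
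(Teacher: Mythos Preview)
Your proof is correct and follows essentially the same route as the paper: fix a unit covector, observe that the symbol map $v\mapsto\Pi_I(\alpha\otimes v)$ is $\SO(n-1)$-equivariant for the stabilizer of $\alpha$, and then use Schur's lemma to conclude that an irreducible $\SO(n-1)$-summand of $V_\lambda$ not appearing in any $V_i$ must lie in the kernel. The paper phrases this via the contrapositive and is terser, but the argument is the same.
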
 

\begin{proof}
By Definition~\ref{injell}, $P_I$ is (injectively) elliptic if its principal symbol, $\Pi_I:(\mathbb{R}^n)^*\otimes V_\lambda\to\underset{i\in I}{\oplus} V_i$, is injective when restricted to the set of decomposable elements, \emph{i.e.} if for any vector $\alpha\in(\mathbb{R}^n)^*$, $\alpha\neq 0$, the linear map:
\[ V_\lambda \to \underset{i\in I}{\oplus} V_i,  \quad  v \mapsto \Pi_I(\alpha\otimes v)\]
is injective. Since $\SO(n)$ acts transitively on the unit sphere in $(\mathbb{R}^n)^*$, one may, without loss of generality, take $\alpha$ to be a unit vector. Then, the above map is $\SO(n-1)$-equivariant, where $\SO(n-1)$ is the stabilizer group of $\alpha$ under the $\SO(n)$-action on the sphere. The existence of an injective and $\SO(n-1)$-equivariant map between  $V_\lambda$ and $\underset{i\in I}{\oplus} V_i$ shows that any $\SO(n-1)$-subrepresentation of $V_\lambda$ occurs in $V_i$ for some $i\in I$.\hfill $\qed$
\end{proof}

In order to use Lemma~\ref{necell} we have to apply the branching rule for the restriction of an $\SO(n)$-representation to $\SO(n-1)$, which we recall in the sequel (see \emph{e.g.} Theorem 9.16, \cite{knapp}). We consider, as usual, the parametrization of irreducible $\SO(n)$-representations by dominant weights, \emph{i.e.} the weights satisfying the inequalities \eqref{domw}.

\begin{Proposition}[Branching Rule for $\SO(n)$]\label{selectruleso}
\noindent \vspace{-0.1cm}
\begin{description}
	\item[(a)] For the group $\SO(2m+1)$, the irreducible representation with highest weight $\lambda=(\lambda_1,\dots,\lambda_m)$ decomposes with multiplicity $1$ under $\SO(2m)$, and the representations of $\SO(2m)$ that appear are exactly those with highest weights $\gamma=(\gamma_1,\dots,\gamma_m)$ such that
\begin{equation}\label{branchingodd}
\lambda_1\geq\gamma_1\geq\lambda_2\geq\gamma_2\geq\cdots\geq\lambda_{m-1}\geq\gamma_{m-1}\geq\lambda_{m}\geq |\gamma_m|.
\end{equation}	
	\item[(b)] For the group $\SO(2m)$, the irreducible representation with highest weight \linebreak $\lambda=(\lambda_1,\dots,\lambda_m)$ decomposes with multiplicity $1$ under $\SO(2m-1)$, and the representations of $\SO(2m-1)$ that appear are exactly those with highest weights $\gamma=(\gamma_1,\dots,\gamma_{m-1})$ such that
\begin{equation}\label{branchingeven}
\lambda_1\geq\gamma_1\geq\lambda_2\geq\gamma_2\geq\cdots\geq\lambda_{m-1}\geq\gamma_{m-1}\geq |\lambda_m|.
\end{equation}	
\end{description}
\end{Proposition}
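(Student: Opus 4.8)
The plan is to reduce the branching rule to an identity of characters and then to a combinatorial manipulation of Weyl's character formula; this is in outline the route of Theorem~9.16 in \cite{knapp}. Since $\SO(n-1)$ is compact, $V_\lambda|_{\SO(n-1)}$ is completely reducible, and the multiplicities $m_{\lambda\gamma}=\dim\Hom_{\SO(n-1)}(V_\gamma,V_\lambda|_{\SO(n-1)})$ are determined by the single identity $\chi_\lambda^{\SO(n)}|_{T'}=\sum_\gamma m_{\lambda\gamma}\,\chi_\gamma^{\SO(n-1)}$, where $T'$ is a maximal torus of $\SO(n-1)$ sitting inside a maximal torus $T$ of $\SO(n)$. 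Everything is therefore encoded in how the Laurent polynomial $\chi_\lambda^{\SO(n)}$, suitably specialized to $T'$, re-expands over the characters $\chi_\gamma^{\SO(n-1)}$.

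I would first treat part (a). Here $\SO(2m)$ has the same rank $m$ as $\SO(2m+1)$, hence the same maximal torus $T$, so the restriction of $\chi_\lambda$ to $T$ is literally the same function, merely re-expanded with respect to the index-two subgroup $W(D_m)\subset W(B_m)$. Writing $x_i=e^{i\theta_i}$ on $T$, Weyl's formula expresses $\chi_\lambda^{B_m}$ as an alternant $\sum_{w\in W(B_m)}\sgn(w)\,x^{w(\lambda+\rho)}$ divided by the Weyl denominator $\delta_{B_m}$, which differs from $\delta_{D_m}$ by the factor $\prod_{i=1}^m(x_i^{1/2}-x_i^{-1/2})$ coming from the extra short roots $e_i$ of $B_m$. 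Multiplying numerator and denominator by this factor and grouping the $W(B_m)$-sum according to the coset decomposition over $W(D_m)$ (that is, summing over the sign changes in one coordinate at a time), each group collapses via a telescoping/geometric-series identity into a $D_m$-character $\chi_\gamma^{D_m}$ whose highest weight $\gamma$ interlaces $\lambda$ exactly as in \eqref{branchingodd}; each admissible $\gamma$ is produced once and nothing cancels, so $m_{\lambda\gamma}\in\{0,1\}$.

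For part (b), $\SO(2m-1)$ has rank $m-1$, and its maximal torus is the subtorus of $T$ with $x_m=1$ (rotations in the first $2m-2$ coordinates), so $m_{\lambda\gamma}$ is read off from $\chi_\lambda^{D_m}(x_1,\dots,x_{m-1},1)=\sum_\gamma m_{\lambda\gamma}\,\chi_\gamma^{B_{m-1}}(x_1,\dots,x_{m-1})$. The wrinkle is that setting $x_m=1$ makes both numerator and denominator of the $D_m$ Weyl character formula vanish, so I would factor out $(x_m-1)$ and $(x_m^{-1}-1)$ (equivalently, differentiate once in $x_m$) before specializing; the resulting ratio of alternants in $x_1,\dots,x_{m-1}$ then has Weyl denominator $\delta_{B_{m-1}}$, and the same peeling-off argument yields precisely the $\chi_\gamma^{B_{m-1}}$ with $\gamma$ interlacing $\lambda$ as in \eqref{branchingeven}, again each with multiplicity one. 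Alternatively one could deduce (b) from (a) by transitivity of branching along $\SO(2m+1)\downarrow\SO(2m)\downarrow\SO(2m-1)$, but the bookkeeping is essentially the same.

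The main obstacle I anticipate is purely combinatorial: keeping track of the index-two behaviour of the $D$-type Weyl group, of the fact that a $D_m$-irreducible with $\gamma_m\neq0$ has a character given by a sum of two determinants (the $\pm\gamma_m$ ambiguity), and of the $x_m\to1$ degeneration in part (b). No individual step is deep, but verifying that every interlacing $\gamma$ occurs with coefficient exactly $1$ and that no other $\gamma$ appears requires careful organization of these alternant identities; since this is precisely Theorem~9.16 of \cite{knapp}, in the present paper we simply invoke that reference.
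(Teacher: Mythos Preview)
Your proposal is correct and matches the paper's approach exactly: the paper does not prove this proposition at all but simply states it and cites Theorem~9.16 of \cite{knapp}, which is precisely what you conclude after your sketch. Your outline of the character-theoretic argument behind Knapp's proof is accurate but superfluous here, since the paper treats this as a standard result to be invoked rather than proved.
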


From Lemma~\ref{necell} and Proposition~\ref{selectruleso} we obtain the maximal non-elliptic operators as follows:

\begin{Proposition}\label{cormaxne}
The maximal non-elliptic operators $P_J$ are given exactly by the sets $J$ in $\NE$, apart from the special case when $n$ is odd, $N=2\ell-1$ and $\lambda_m\geq 1$. In this case the sets $J$ of $\NE$ that do not contain $\ell$ (which corresponds to the weight $0$) are maximal non-elliptic.
\end{Proposition}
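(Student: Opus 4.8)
The plan is to prove $\NE$ consists exactly of the maximal non-elliptic index sets by establishing two things: first, that every $P_J$ with $J\in\NE$ is non-elliptic (with the stated exception), and second, that these are maximal, i.e.\ any $I$ strictly containing some $J\in\NE$ yields an elliptic operator. The first part is an application of Lemma~\ref{necell} together with the branching rule in Proposition~\ref{selectruleso}; the second part follows by a counting/combinatorial argument reducing to Proposition~\ref{corcgh}.

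For the non-ellipticity of $P_J$ with $J\in\NE$: recall that $J$ is obtained by choosing, for each $2\le i\le\ell$, exactly one of the two weights in the box $\{i,N+2-i\}$ of Diagram~\ref{figsro} or~\ref{figsre}, so that $\widehat J$ contains $\{1\}$ (and, when $N=2\ell$, also $\{\ell+1\}$) together with the \emph{other} weight from each box. Translated back into the $\varepsilon$-notation, the target bundles $V_i$ for $i\in J$ are the bundles $V_{\lambda+\varepsilon}$ for a specific choice of signs $\varepsilon\in\{\pm\varepsilon_1,\dots,\pm\varepsilon_m\}$ (and $0$), missing the Cartan summand $\varepsilon_1$. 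The strategy is to exhibit an irreducible $\SO(n-1)$-summand of $V_\lambda$ that fails to appear in any of these $V_{\lambda+\varepsilon}$, $\varepsilon\in J$. The natural candidate is the ``extremal'' $\SO(n-1)$-constituent $\gamma$ of $V_\lambda$, obtained by saturating the branching inequalities \eqref{branchingodd}/\eqref{branchingeven} either at the top ($\gamma_i=\lambda_i$ for all $i$) or at the bottom ($\gamma_i=\lambda_{i+1}$); passing to $V_{\lambda+\varepsilon}$ shifts exactly one coordinate $\lambda_j$ by $\pm1$, and one checks via the interlacing inequalities that the chosen $\gamma$ survives in $V_{\lambda+\varepsilon}$ \emph{only} for those $\varepsilon$ lying in $\widehat J$. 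Here one must handle the first coordinate carefully: since every $J\in\NE$ has $\varepsilon_1\notin J$, the top-saturated $\gamma=(\lambda_1,\lambda_2,\dots)$ (for $n$ odd) is the right choice — it needs $\gamma_1\le(\lambda+\varepsilon)_1$, which forces the $\varepsilon_1$-shift, excluding it from all $\varepsilon\in J$. The half-integral exceptional case ($n$ odd, $N=2\ell-1$, $\lambda_m\ge1$, $\ell\in J$) is exactly where the zero-weight target $V_\lambda$ itself lies in $J$, so every $\SO(n-1)$-constituent of $V_\lambda$ trivially appears in a target, and Lemma~\ref{necell} gives no obstruction — consistent with case~3 of Remark~\ref{listminell} being elliptic.

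For maximality: suppose $I\subset\{1,\dots,N\}$ and $I\not\subset\widehat J$ for every $J\in\NE$, i.e.\ $I$ contains at least one element of each box-pair $\{i,N+2-i\}$, $2\le i\le\ell$ — equivalently $\widehat I\in\NE$ is not possible, meaning $I$ meets every such pair. A pigeonhole argument on the $\ell-1$ box-pairs shows that either $I\supseteq\{i,N+2-i\}$ for some $i\in\{2,\dots,\ell\}$, or $I$ contains $1$ or $\ell+1$, or $I$ meets every box but in a ``transversal'' way; in each situation $I$ contains one of the minimal elliptic sets of Proposition~\ref{corcgh} (namely $\{1\}$, $\{\ell+1\}$, or $\{i,N+2-i\}$), so by property~(2) on page~\pageref{questell} (monotonicity of ellipticity under enlargement of $I$) the operator $P_I$ is elliptic. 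Hence any non-elliptic $P_I$ has $I$ contained in some element of $\NE$, and since each $J\in\NE$ has the fixed cardinality $\ell-1$, the maximal non-elliptic sets are precisely those $J\in\NE$ for which $P_J$ is genuinely non-elliptic — all of them outside the exceptional case, and those avoiding $\ell$ inside it.

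The main obstacle I expect is the bookkeeping in the first part: correctly matching the abstract indices $1,\dots,N$ (ordered by decreasing translated conformal weight) back to the concrete weights $\pm\varepsilon_j$, keeping track of the orientation-reversal subtlety when $n=2m$ and $\lambda_m=0$ (where $\{-\varepsilon_m,\varepsilon_m\}$ is a single summand), and then verifying the interlacing computation that pins down \emph{exactly} which $\varepsilon$ preserve the chosen extremal $\SO(n-1)$-type. This is where one has to be scrupulous about the two cases $n$ odd / $n$ even and about whether one saturates the branching inequalities from above or below; a wrong choice of $\gamma$ produces a $\gamma$ that also appears in some $V_{\lambda+\varepsilon}$ with $\varepsilon\in J$, and the argument collapses. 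Once the right extremal constituent is identified the verification is routine, but identifying it uniformly is the delicate point.
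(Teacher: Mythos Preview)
Your overall strategy matches the paper's: use Lemma~\ref{necell} with the branching rule to show each $P_J$, $J\in\NE$, is non-elliptic, then invoke Proposition~\ref{corcgh} and monotonicity for maximality. The maximality half is essentially correct (though your phrasing ``$I\not\subset\widehat J$ for every $J$'' does not say what you want it to; the clean statement is simply that adding any $k\notin J$ to $J$ produces a superset of one of the sets $\{1\}$, $\{\ell+1\}$, or $\{i,N+2-i\}$).

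The non-ellipticity half, however, has a genuine gap: neither of your two ``extremal'' candidates for $\gamma$ works. Take $n=2m+1$ and your top-saturated $\gamma=(\lambda_1,\dots,\lambda_m)$. Suppose the box $\{-\varepsilon_i,\varepsilon_{i+1}\}$ contributes $\varepsilon_{i+1}$ to $J$. The highest weight of $V_{\lambda+\varepsilon_{i+1}}$ is $(\dots,\lambda_i,\lambda_{i+1}+1,\dots)$, and the interlacing constraints at slots $i$ and $i+1$ read $\lambda_i\ge\gamma_i\ge\lambda_{i+1}+1\ge\gamma_{i+1}$. With $\gamma_i=\lambda_i$, $\gamma_{i+1}=\lambda_{i+1}$ these are satisfied (relevance of the box gives $\lambda_i\ge\lambda_{i+1}+1$), so your $\gamma$ \emph{does} occur in this target and Lemma~\ref{necell} says nothing. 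The all-bottom choice fails symmetrically whenever $-\varepsilon_i\in J$. A single uniform $\gamma$ cannot serve all $2^{\ell-1}$ subsets $J\in\NE$.

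The missing idea, which is what the paper does, is to let $\gamma$ depend on $J$: saturate each interlacing slot according to which weight of that box lies in $J$. Concretely, for $n=2m+1$ set
\[
\gamma_i=\begin{cases}\lambda_i,&\text{if }-\varepsilon_i\in J\text{ or the box is irrelevant},\\ \lambda_{i+1},&\text{if }\varepsilon_{i+1}\in J,\end{cases}
\qquad (1\le i\le m-1),\qquad \gamma_m=\lambda_m.
\]
Then for each $\varepsilon\in J$ the shift $\lambda\mapsto\lambda+\varepsilon$ violates precisely the saturated inequality at that slot, so $\gamma$ is absent from every $V_{\lambda+\varepsilon}$ with $\varepsilon\in J$. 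The case $n=2m$ is analogous with a modification at the last coordinate to handle the sign of $\lambda_m$. You correctly flagged this as the delicate point, but it is not merely bookkeeping: the $J$-adapted construction is the substance of the argument.
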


\begin{proof}
We recall that the coordinates of a dominant weight $\lambda$ are given with respect to the basis $\{\varepsilon_i\}_{i=\overline{1,m}}$ introduced in \S ~\ref{sectgengrad}. Here it is more convenient to consider the elements of a set $J$ as weights of the standard representation, instead of the notation with indices corresponding to the ordering of the translated conformal weights.

Let $J$ be a subset in $\NE$, \emph{i.e.} $J$ has cardinality $\ell-1$, where $N=2\ell$ or $N=2\ell-1$. If $n=2m$, then $J$ is obtained by choosing exactly one weight from each pair of relevant weights of type $\{-\varepsilon_i,\varepsilon_{i+1}\}$, for $1\leq i\leq m-2$ and one weight from $\{-\varepsilon_{m-1}, \varepsilon_m\}$, if $\lambda_m>0$, or one weight from $\{-\varepsilon_{m-1}, -\varepsilon_m\}$, if $\lambda_m<0$. If $n=2m+1$, then we consider the sets $J\in \NE$ obtained by choosing exactly one weight from each pair of relevant weights of type $\{-\varepsilon_i,\varepsilon_{i+1}\}$, for $1\leq i\leq m-1$ and the weight $-\varepsilon_m$, if it is relevant. 

For each such set $J$, it is enough to find an $\SO(n-1)$-subrepresentation of $V_\lambda$ that does not occur in $\underset{\varepsilon\in J}{\oplus}V_{\lambda+\varepsilon}$. By Lemma~\ref{necell} it will then follow that the corresponding operator $P_J$ is not elliptic. When enlarging the set $J$ to some set $J'$ by adding any other relevant weight, there is at least one subset $I$ of $J'$ which is equal to one of those listed in Proposition~\ref{corcgh}, showing that $J'$ is elliptic. This means that $J$ is maximal non-elliptic.

For $n=2m$ we choose the irreducible $\SO(2m-1)$-subrepresentation of $\lambda$ with highest weight $\gamma=(\gamma_1,\dots,\gamma_{m-1})$, where the coordinates are defined by the following rule, for each $1\leq i\leq m-2$: 
\begin{equation}
\gamma_i=
\begin{cases}
\lambda_i, & \text{if } \lambda_i=\lambda_{i+1} \text{ or }  -\varepsilon_i\in J\\
\lambda_{i+1}, & \text{if } \varepsilon_{i+1}\in J,
\end{cases}
\end{equation}
and 
\begin{equation}
\gamma_{m-1}=
\begin{cases}
\lambda_{m-1}, & \text{if }  \lambda_{m-1}=\lambda_{m}=0 \text{ or } -\varepsilon_{m-1}\in J\\
\lambda_{m}, & \text{if } \varepsilon_{m}\in J \text{ and } \lambda_m>0\\
-\lambda_{m}, & \text{if } -\varepsilon_{m}\in J \text{ and } \lambda_m<0.
\end{cases}
\end{equation}

We recall that the condition $\lambda_i=\lambda_{i+1}$, for $1\leq i\leq m-2$, is equivalent to the fact that the weights $\{-\varepsilon_i,\varepsilon_{i+1}\}$ are not relevant for $\lambda$ and $\lambda_{m-1}=\lambda_{m}=0$ is the only case when $-\varepsilon_{m-1}$ is not relevant (see \emph{e.g.} Diagram~\ref{figsre}). The coordinates of $\gamma$ fulfill the inequalities \eqref{branchingeven} for the representation $\lambda$, showing that $\gamma$ is an irreducible $\SO(2m-1)$-subrepresentation of $\lambda$. On the other hand, it can be directly checked that the inequalities \eqref{branchingeven} are not satisfied anymore for any of the $\SO(2m)$-representations of highest weight $\lambda+\varepsilon$ with $\varepsilon\in J$, showing that $\gamma$ does not occur as $\SO(2m-1)$-subrepresentation in $\oplus_{\varepsilon\in J} V_{\lambda+\varepsilon}$.

For $n=2m+1$ we similarly choose an irreducible $\SO(2m)$-subrepresentation of $\lambda$ with highest weight $\gamma=(\gamma_1,\dots,\gamma_{m})$, whose coordinates are defined by the following rule, for each $1\leq i\leq m-1$: 
\begin{equation}
\gamma_i=
\begin{cases}
\lambda_i, & \text{if } \lambda_i=\lambda_{i+1} \text{ or }  -\varepsilon_i\in J\\
\lambda_{i+1}, & \text{if } \varepsilon_{i+1}\in J,
\end{cases}
\end{equation}
and $\gamma_m=\lambda_m$. It follows also in this case that the inequalities \eqref{branchingodd} are fulfilled for $\lambda$, but fail for any $\lambda+\varepsilon$ with $\varepsilon\in J$. The branching rule then implies that $\gamma$ is an irreducible $\SO(2m)$-subrepresentation of $V_\lambda$ which does not occur as subrepresentation in $\oplus_{\varepsilon\in J}V_{\lambda+\varepsilon}$. \hfill $\qed$
\end{proof}

\begin{Remark}\label{spcase}
From Proposition~\ref{corcgh} and Corollary~\ref{cormaxne} we recover Branson's classification of minimal elliptic operators, up to an exceptional case. Namely, when $n$ is odd, $N=2\ell-1$ and $\lambda_m>0$, then the zero weight is relevant. If $\lambda$ is moreover properly half-integral, then the corresponding operator $P_{\ell}:V_\lambda M \to V_\lambda M$ is elliptic (by Branson's result), while if $\lambda$ is integral, $P_{\ell}$ is not elliptic. Unfortunately this special case cannot be recovered by the above arguments, since they only involve the translated conformal weights, which are associated to the Lie algebra $\so(n)$, so that they do not distinguish between the groups $\Spin(n)$ and $\SO(n)$. The argument based on the branching rule for establishing the maximal non-elliptic operators does not work either for the zero weight, since in this case the source and target representations are isomorphic.
\end{Remark}

\begin{Remark}\label{othersubgps}
This new approach to the classification of minimal elliptic operators has the advantage that it is mainly based on representation theory and avoids the techniques of harmonic analysis which, as powerful as they are, seem to be specific for the special orthogonal group. Our method suggests that it can be carried over to other subgroups $G$ of the special orthogonal group, in order to provide a similar classification of the minimal elliptic operators obtained from $G$-generalized gradients. We notice that the argument in Lemma~\ref{necell} still works for all the groups $G$ which arise in important geometric situations and are mostly encountered in literature, \emph{i.e.} for those in Berger's list of holonomy groups (which are known to act transitively on the unit sphere) and combined with their branching rules yield a list of non-elliptic operators. Until now we only have partial results, particularly for the group $G_2$, which we shall complete in a forthcoming paper.
\end{Remark}

\end{document}